\documentclass[bibtex,12pt,en]{elegantpaper}
\usepackage{extarrows}
\usepackage{esint}
\usepackage{mathrsfs}
\usepackage{mathtools}
\usepackage{microtype}
\numberwithin{equation}{section}
\allowdisplaybreaks[4]
\everymath{\displaystyle}

\newcommand{\dd}{\,dx}
\newcommand{\ds}{\,dS}
\newcommand{\Div}{\operatorname{div}}

\newcommand{\dist}{\operatorname{dist}}
\newcommand{\Lapm}{\Delta_m}
\newcommand{\calR}{\mathcal R}
\newcommand{\calS}{\mathcal S}
\newcommand{\Am}{\mathcal A_m}
\newcommand{\A}{\mathsf A}
\newcommand{\LL}{\mathcal L}

\title{The $m$-Laplace equation with a gradient term II: classification in the second critical case}
\author{Tian Wu \and Jin Yan \and Hua Zhu}
\date{}

\begin{document}
\maketitle

\renewcommand{\thefootnote}{\fnsymbol{footnote}}

\begin{abstract}
\hspace{2em}Let $1<m<n$ and $0<q<m-1$. We classify positive, locally bounded weak
solutions of
\[
 -\Delta_m u=u^p|Du|^q\quad\text{in }\mathbb R^n,
 \qquad
 p=\frac{m-q}{n-m}\left(n+\frac{q}{m-1-q}\right)-1.
\]
Every solution is constant or belongs to an explicit family of radial
solutions, up to translation and scaling. No global bound, decay, or
finite-energy assumption is required. The proof is based on a scalar
differential identity and a maximum principle that remains valid at
critical points. These lead to a sharp gradient bound, classification
of normalized solutions, and a first-contact argument for arbitrary
entire solutions. Local regularity and the behavior at critical points
are treated throughout in the weak solution class; global $C^2$
regularity is not assumed.

\keywords{$m$-Laplace equation, product gradient term,
Liouville-type theorem}\\
\textbf{2020 Mathematics Subject Classification:}
Primary 35B53; Secondary 35J92, 35B33.
\end{abstract}

\section{Introduction}

In 1971, Obata~\cite{O1971} studied the Yamabe problem
on closed Einstein manifolds and established a rigidity result
using a differential identity. In particular, if a closed
Einstein manifold admits a nonconstant positive conformal
factor that produces a metric of constant scalar curvature,
then the manifold is isometric to the standard sphere,
up to normalization. On the standard sphere, the positive
solutions arise from conformal transformations. Under
stereographic projection, they correspond to the positive
solutions of the critical Yamabe equation
\begin{equation}\label{C-Y:eq}
-\Delta u=u^{\frac{n+2}{n-2}}
\qquad \text{in } \mathbb{R}^n,\quad n\geqslant 3.
\end{equation}
These solutions form the explicit family
\begin{equation}\label{bubble solution}
u_{\lambda,x_0}(x)
=
[n(n-2)]^{\frac{n-2}{4}}
\left(
\frac{\lambda}
{1+\lambda^2|x-x_0|^2}
\right)^{\frac{n-2}{2}},
\qquad
\lambda>0,\quad x_0\in\mathbb{R}^n.
\end{equation}

Gidas, Ni, and Nirenberg~\cite{GNN1981} used the method of
moving planes to establish the radial symmetry of positive
solutions of the critical Yamabe equation under a suitable
decay assumption at infinity. In particular, assuming
\begin{equation}\label{condi}
    u(x)=O(|x|^{2-n})
\qquad \text{as } |x|\to\infty,
\end{equation}
the positive solutions are given by the explicit family \eqref{bubble solution}.
Up to multiplication by a nonzero constant, this family
also characterizes the nonnegative extremals of the sharp
Sobolev inequality. In 1989, Caffarelli, Gidas, and Spruck~\cite{CGS1989}
classified all positive entire classical solutions of the
critical Yamabe equation \eqref{C-Y:eq} without imposing an a priori
decay assumption at infinity. Their approach involved
the Kelvin transform and an asymptotic symmetry method.
Later, Chen and Li~\cite{CL1991} gave another,
more concise proof of this classification result using
the Kelvin transform and the method of moving planes. Chang, Gursky, and Yang~\cite{CGY2003} provided an alternative
proof of the classification of positive entire solutions
of the critical Yamabe equation \eqref{C-Y:eq} by employing a differential
identity inspired by Obata's method and estimating the
boundary terms at infinity. When $n=3$, they showed that
every positive entire smooth solution belongs to the
explicit family~\eqref{bubble solution}, without imposing any additional
decay or integrability assumption. For $n\geqslant 4$, they
obtained the same classification under the additional
finite-volume condition
\[
\int_{\mathbb{R}^n}
u^{\frac{2n}{n-2}}\,dx<\infty.
\]

Correspondingly, we consider the $m$-Laplace equation,
\begin{equation}\label{C-Y:m-eq}
-\Delta_{m} u=u^{p}
\qquad \text{in } \mathbb{R}^n,\quad n\geqslant 3.
\end{equation}
which is a quasilinear counterpart of the Euclidean Yamabe
equation. When $p=m^*-1$, where
\[
m^*=\frac{nm}{n-m}
\]
is the critical Sobolev exponent, equation \eqref{C-Y:m-eq} is called critical $m$-Laplace equation. Under suitable assumptions on the solution class, every positive
solution of the critical $m$-Laplace equation \eqref{C-Y:m-eq} belongs to the explicit family of Aubin--Talenti bubble solutions
\begin{equation*}
u_{\lambda,x_0}(x)
=
C_{n,m}
\left(
\frac{\lambda}
{1+\lambda^{\frac{m}{m-1}}
|x-x_0|^{\frac{m}{m-1}}}
\right)^{\frac{n-m}{m}},
\qquad
\lambda>0,\quad x_0\in\mathbb{R}^n,
\end{equation*}
where
\[
C_{n,m}
=
\left[
n\left(\frac{n-m}{m-1}\right)^{m-1}
\right]^{\frac{n-m}{m^2}}.
\]
These solutions are also nonnegative extremals of the sharp
Sobolev inequality, with an appropriate normalization.
The classification of positive solutions to
\eqref{C-Y:m-eq} was initially studied under
the finite-energy assumption
\[
u\in D^{1,m}(\mathbb{R}^n)
=
\left\{
u\in L^{m^*}(\mathbb{R}^n):
D u\in L^m(\mathbb{R}^n)
\right\}.
\]
The cases $1<m<2$ and $2<m<n$ require different treatments,
due to the singular or degenerate nature of the $m$-Laplace
operator at critical points. Damascelli, Merch\'an, Montoro
and Sciunzi~\cite{Damascelli2014} obtained classification results
in the range $\frac{2n}{n+2}<m<2$ under the finite-energy
assumption, and the range $1<m\leqslant2$ was subsequently
extended by V\'etois \cite{Vetois2024}. For $2<m<n$, Sciunzi~\cite{Sciunzi2016}
established the classification of all positive
$D^{1,m}(\mathbb{R}^n)$-solutions. Together with the previous
results for $1<m\leqslant 2$, this completed the classification
of positive finite-energy solutions for the full range
$1<m<n$. Ciraolo, Figalli and Roncoroni~\cite{CFR2020}
developed a different approach, based on integral estimates
and integral identities rather than the moving planes method,
to the classification of critical $m$-Laplace equations \eqref{C-Y:m-eq}.
Their method not only recovers the classification of positive
finite-energy solutions in the Euclidean setting, but also
extends the classification to critical anisotropic $m$-Laplace
equations induced by smooth norms in general convex cones.

A natural subsequent question is whether the finite-energy
assumption can be removed. Catino, Monticelli and
Roncoroni~\cite{Catin2023} studied positive entire solutions
which may have infinite energy. They obtained an
unconditional rigidity result when $n=2$, and when $n=3$
with $\frac{3}{2}<m<2$, while in the remaining cases they
established the classification under suitable energy-growth
conditions or appropriate control at infinity. More recently, Ou~\cite{Ou2025} obtained the classification
without the finite-energy assumption in a substantially
larger range $n\geqslant 3,\frac{n+1}{3}\leqslant m<n$. V\'etois~\cite{Vetois2024} further
improved the admissible range for $n\geqslant4$. These results
show a progressive development from the classification of
finite-energy solutions toward the classification of
general positive entire solutions of the critical
$m$-Laplace equation. More recently, Zhang~\cite{Zhang2026} established a fully
unrestricted Liouville classification for the critical
$m$-Laplace equation in the full range $1<m<n$, without
imposing any finite-energy, growth, or asymptotic assumption
on positive entire solutions. More generally, the result
applies to critical equations with a positive, bounded,
continuous, and nonincreasing coefficient. As a further
consequence, a scale-invariant Schoen-type Harnack inequality
was established, and, for the pure critical equation, the
Liouville classification was shown to be equivalent to the
corresponding Harnack estimate.

Motivated by the study of critical elliptic equations,
it is natural to consider semilinear equations involving
nonlinearities that depend on both the solution and its
gradient. In particular, the equation
\begin{equation}\label{2-(p,q)-eq}
-\Delta u=u^p|D u|^q
\qquad \text{in } \mathbb{R}^n
\end{equation}
has attracted considerable attention in connection with
Liouville-type theorems, critical exponents, and the
existence and classification of positive entire solutions. 
The first critical curve of equation \eqref{2-(p,q)-eq} is given by
\begin{equation*}
(n-2)p+(n-1)q=n.
\end{equation*}
In the strictly subcritical range $(n-2)p+(n-1)q<n$,
Mitidieri and Pohozaev~\cite{MP2001} established a
Liouville-type result for nonnegative entire $C^{1}$ supersolutions. By means of an integral Bernstein method, Bidaut-V\'eron, Garc\'ia-Huidobro and
V\'eron~\cite{BGHV2019} established
a Liouville-type theorem in a range of parameters
characterized by the condition
\[
G_n(p,q)<0,
\]
where $G_n(p,q)
={}-\bigl((n-1)^2q+n-2\bigr)p^2+\bigl[n(n-1)q^2-(n^2+n-1)q-n-2\bigr]p
-nq^2.$
In particular, every positive entire solution is constant
in this range. They also obtained a sharp existence criterion for
nonconstant positive entire radial solutions: such
solutions exist if and only if $p\geqslant 0$, $0\leqslant q<1$,
and
\begin{equation*}
(n-2)p+(n-1)q
\geqslant n+\frac{2-q}{1-q}.
\end{equation*}
Moreover, they constructed an explicit family of positive
radial solutions on the corresponding second critical curve $(n-2)p+(n-1)q= n+\frac{2-q}{1-q}$.

Compared with the classical Laplace operator, the
$m$-Laplace operator has a more general quasilinear
structure. For $m\neq 2$, the ellipticity of its
linearization depends on the gradient, and the operator
may become degenerate when $m>2$ or singular when
$1<m<2$ at points where the gradient vanishes.
These features make the regularity analysis and
classification of positive solutions more challenging.

A natural quasilinear generalization of this equation \eqref{2-(p,q)-eq} is
\begin{equation}
-\Delta_m u=u^p|D u|^q
\qquad \text{in } \mathbb{R}^n,
\label{equation}
\end{equation}
where
\[
\Delta_m u
=
\operatorname{div}
\left(
|D u|^{m-2}D u
\right),
\qquad 1<m<n.
\]
The interaction between the nonlinear diffusion operator
and the product-type gradient nonlinearity leads to a
more intricate critical structure. Moreover, the
degeneracy or singularity of the $m$-Laplace operator
at critical points introduces additional difficulties
in the analysis of positive solutions. These features motivate the investigation of critical
thresholds, Liouville-type properties, and the existence
and classification of positive entire solutions of
equation~\eqref{equation}.

A positive, locally bounded weak solution of equation~\eqref{equation} in an open set $\Omega$ is a
function
\[
 u\in W^{1,m}_{\mathrm{loc}}(\Omega)
       \cap L^\infty_{\mathrm{loc}}(\Omega),
 \qquad u>0\ \text{almost everywhere},
\]
that satisfies
\begin{equation}\label{weak}
 \int_\Omega \Am(Du)\cdot D\varphi\dd
 =\int_\Omega u^p|Du|^q\varphi\dd,
 \qquad \varphi\in C_c^\infty(\Omega),
\end{equation}
where $\Am(\xi)=|\xi|^{m-2}\xi, \Delta_m u=\Div\Am(Du)$. Since $p>0$ and $q<m-1$, the right-hand side belongs locally to
$L^{m/q}\subset L^{m/(m-1)}$. Thus \eqref{weak} also holds for compactly
supported $W^{1,m}$ test functions, by density.

\begin{theorem}\label{main}
Assume $ n\geqslant2, 1<m<n, 0<q<m-1,p=\frac{m-q}{n-m}\left(n+\frac{q}{m-1-q}\right)-1$. Every positive, locally bounded weak solution of
\eqref{equation} is constant or has the form
\begin{equation}\label{bubblefamily}
 u(x)=\left[
 \frac{C_{n,m,q}\lambda}
 {1+(\lambda|x-x_0|)^{(m-q)/(m-1-q)}}
 \right]^{\frac{(n-m)(m-1-q)}{(m-1)(m-q)}}
\end{equation}
for some $x_0\in\mathbb R^n$ and $\lambda>0$, where
\begin{equation*}\label{Cconstant}
 C_{n,m,q}=
 \left[
 \left(n+\frac{q}{m-1-q}\right)
 \left(\frac{n-m}{m-1}\right)^{m-1-q}
 \right]^{1/(m-q)}.
\end{equation*}
Each solution has a positive $C^{1,\eta}_{\mathrm{loc}}$ representative
for some $\eta>0$ and is smooth wherever its gradient is nonzero.
\end{theorem}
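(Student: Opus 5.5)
The plan is to reduce the equation to a scalar identity for a suitable power of $u$, in the spirit of Obata's method, and then combine a maximum-principle gradient bound with a first-contact argument.

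\textbf{Step 1: the substitution.} Set $\alpha=\frac{(n-m)(m-1-q)}{(m-1)(m-q)}$, the exponent in \eqref{bubblefamily}, and write $u=v^{-\beta}$ with $\beta$ chosen so that the bubble corresponds to
\[
v=a+b|x-x_0|^{(m-q)/(m-1-q)}.
\]
This is the analogue of the fact that $u^{-m/(n-m)\cdot\ldots}$ is a quadratic polynomial for the Aubin--Talenti bubble. In the new variable, the critical relation between $p$ and $q$ should make the transformed equation homogeneous in $v$. Concretely, I expect it to take the form
\[
v\,\Delta_m v = c_1|Dv|^{m}+c_2\,|Dv|^{q'}
\]
with suitable constants, or, after dividing by $v$, a form in which the scaling-invariant quantity
\[
P=\frac{|Dv|^{\gamma}}{v}
\]
satisfies a clean elliptic inequality. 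Here $\gamma=\frac{m-q}{1}\cdot\frac{1}{\ldots}$ is chosen so that $P$ is constant on the radial family; its value there is $C_{n,m,q}$-normalized.

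Before this, I invoke local regularity. Since the right-hand side lies in $L^{m/q}_{\mathrm{loc}}$ with $m/q>n/m$ after adjustment, standard DiBenedetto--Tolksdorf theory gives $u\in C^{1,\eta}_{\mathrm{loc}}$. Harnack gives positivity of a representative, and Schauder theory gives smoothness on $\{Du\neq0\}$. This settles the last sentence of the theorem.

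\textbf{Step 2: the differential identity.} On $\{Dv\neq0\}$, derive the scalar identity
\[
\mathcal L P = |D v|^{\ldots}\,\Bigl|\,\nabla(\mathcal A_m(Dv)) - \tfrac{1}{n}\Delta_m v\, I\Bigr|^2_{\text{weighted}}\ \geqslant 0 ,
\]
where $\mathcal L$ is the linearized $m$-Laplacian plus a drift, degenerate or singular where $Dv=0$. The right-hand side is a Newton--Cauchy--Schwarz trace-free square. It vanishes exactly when $\mathcal A_m(Dv)$ has Jacobian proportional to the identity, which forces $\mathcal A_m(Dv)=c(x-x_0)$ and hence the radial profile.

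The maximum principle for $P$ must survive at critical points. The plan is to work with a regularized version
\[
P_\varepsilon=\frac{(|Dv|^2+\varepsilon)^{\gamma/2}}{v}.
\]
At critical points of $v$, $P$ attains its minimum value $0$, so these points cannot be interior maxima of $P$. This should give the strong maximum principle for $P$ on all of $\mathbb R^n$.

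\textbf{Step 3: sharp gradient bound for normalized solutions.} Normalize $u$ at a point and show $P\leqslant \sup$ of the value attained by the bubble, i.e.\ $P\leqslant C$. Localize with a cutoff $\eta$ on $B_R$ and apply the maximum principle to $\eta P$. The Bernstein-type argument should give
\[
P\leqslant C+o(1)\quad\text{as } R\to\infty,
\]
using only local boundedness; the scaling invariance of the critical exponent is exactly what makes the error terms decay.

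Then take the solution and the bubble with the same value and gradient normalization at a point. A solution for which $P$ attains its supremum at an interior point has $\mathcal L P\geqslant0$ with an interior maximum, so $P$ is constant, the square vanishes, and $u$ is a bubble. This is the classification of normalized solutions.

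\textbf{Step 4: arbitrary entire solutions.} For a general solution, compare with the family $u_{\lambda,x_0}$ by sliding $\lambda$ until first contact from above or below in the $v$-variable, which means comparing the convex function $v$ with cones $a+b|x-x_0|^{s}$. At a first contact point, the comparison principle for the equation, valid in the $C^1$ class via the identity, forces coincidence. If no contact is possible, $Dv\equiv0$ and $u$ is constant.

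The main obstacle I anticipate is Step 2/3 at critical points and at infinity. Deriving the identity requires $C^3$ regularity only where $Dv\neq0$. Justifying the maximum principle across $\{Dv=0\}$, and showing that the cutoff errors vanish without any decay assumption, is the delicate part. I would first try the regularization $P_\varepsilon$ together with the observation that $P=0$ on the critical set.
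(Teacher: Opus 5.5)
Your proposal has genuine gaps, starting with the scalar quantity itself, which then propagates into your treatment of critical points. No expression $|Dv|^s/v$ can be constant on the bubble $v=1+K|x|^\gamma$: it vanishes at the centre, where $v=1$. The $|Dv|^q$ term in the transformed equation $v\Lapm v=B|Dv|^m+c|Dv|^q$ forces an additive constant. Here $v=u^{-1/b}$, $B=(m-1)(b+1)$ and $c=b^{-\tau}$. The right quantity is $P=(B|Dv|^{m-q}+c)/v$, so that $\Lapm v=P|Dv|^q$.

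Your equality case is also wrong. On the bubble, $\Am(Dv)=(K\gamma)^{m-1}|x|^{q/\tau}x$, whose Jacobian is not a multiple of $I$. Imposing $\Am(Dv)=c(x-x_0)$ would give the Aubin--Talenti exponent $m/(m-1)$ rather than $\gamma=(m-q)/(m-1-q)$. The correct trace-free tensor is anisotropic, $T=D^2v-\frac{Pz^{1-\tau}}{\mathfrak D}(\tau I+(1-\tau)\nu\otimes\nu)$ with $z=|Dv|$ and $\nu=Dv/|Dv|$, and it is trace-free with respect to $\A$.

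With the correct $P$, one has $P=c/v>0$ on the critical set, not $P=0$. In the decisive situation ($0<u\le1$, $u(0)=1$), the maximal value $P=c$ is attained at the critical point $0$, and a priori only there. So the critical set is exactly where the maximum sits, and the regularization $P_\varepsilon$ gives no way to propagate $P\equiv c$ from there. This step needs Lemma~\ref{growth}, uniqueness of the tangent profile $K|x|^\gamma$ at such a point, and a punctured-ball barrier $g^\sigma$ with $\LL(g^\sigma)>0$, as in Lemma~\ref{criticallemma}.

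Steps 3 and 4 miss the main difficulty. $P$ is not scale invariant: for $u_\lambda=\lambda^bu(\lambda x)$ one gets $P_\lambda(x)=\lambda P(\lambda x)$, and a bubble with maximum $M$ has $P\equiv cM^{1/b}$. So neither a cutoff argument using only local bounds nor normalization at a single point can yield $P\le c$. That bound holds under the global hypothesis $0<u\le1$, via a multiplier $\phi(v)$ and translation compactness (Lemma~\ref{globalP}). The rigidity step then also needs $u$ to \emph{attain} its supremum.

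Proving that an arbitrary entire solution is bounded and attains its maximum, with no decay or energy assumption, is the heart of the theorem, and your sliding argument does not do it. Without decay you cannot place a bubble above $u$ to start sliding. Contact can also escape to infinity, and ``no contact implies $Dv\equiv0$'' has no justification. What is missing is the following chain:
\begin{itemize}
\item the lower bound $u\ge c_1(1+|x|)^{-\kappa}$;
\item the Harnack inequality \eqref{harnackeq}, obtained by a doubling selection plus the normalized classification;
\item the first-contact Proposition~\ref{firstcontact}, proved via the blow-down $W_j=\rho_j^\kappa u_j(\rho_j\cdot)$ with $\varepsilon_j=\rho_j^{-\gamma}\to0$, the Kichenassamy--V\'eron pole expansion, the boundary maximum principle for $\ell$ (which rules out a positive finite part $\beta$), and the local mass estimate (which excludes concentration at the contact point).
\end{itemize}
A smaller point on regularity: cite the structure bound $w^p|\xi|^q\le C_M(1+|\xi|^{m-1})$, valid since $q<m-1$, rather than $L^{m/q}$ integrability, because $m/q>n/m$ fails in general.
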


The classification problem for critical $m$-Laplace equations
has also been extensively studied in the half-space setting.
A fundamental model arises from the Euler--Lagrange equation
associated with the Sobolev trace inequality. More precisely,
letting
\[
m_*=\frac{(n-1)m}{n-m},
\]
one is led to the critical boundary problem
\begin{equation}
\begin{cases}
\Delta_m u=0,\quad u>0,
& \text{in } \mathbb{R}_+^n,\\[1mm]
|D u|^{m-2}\dfrac{\partial u}{\partial x_n}
=-u^{m_*-1},
& \text{on } \partial\mathbb{R}_+^n.
\end{cases}
\label{eq:trace-critical}
\end{equation}
More recently, Zhou~\cite{YangZhou2024} classified all positive
finite-energy solutions of \eqref{eq:trace-critical} for the
full range $1<m<n$. A closely related problem arises when the critical Sobolev
nonlinearity is also present in the interior. More generally,
consider the problem
\begin{equation}
\begin{cases}
\Delta_m u+u^p=0,\quad u>0,
& \text{in } \mathbb{R}^n_+,\\[1mm]
|D u|^{m-2}\dfrac{\partial u}{\partial x_n}
=-u^q,
& \text{on } \partial\mathbb{R}^n_+,
\end{cases}
\label{eq:half-space}
\end{equation}
where $1<m<n$. In the critical case,
\[
p=\frac{nm}{n-m}-1,
\qquad
q=\frac{n(m-1)}{n-m},
\]
problem~\eqref{eq:half-space} is related to a Sobolev trace
inequality in the half-space and to the Yamabe problem on
manifolds with boundary.

When $m=2$, Escobar~\cite{Escobar1990} classified positive
solutions in the critical case under an additional decay
assumption at infinity. More precisely, under a hypothesis \eqref{condi}, he showed that every positive solution is of the standard
bubble form
\begin{equation}
u(x',x_n)
=
\left(
\frac{\mu}
{1+\mu^2|x-x_0|^2}
\right)^{\frac{n-2}{2}},
\qquad
\mu>0,\quad x_0\in\mathbb{R}_{-}^{n}.
\end{equation}
In 1995, Li and Zhu~\cite{LZ1995} removed
this additional assumption by means of the moving spheres
method and obtained the classification of all positive
solutions. Later, Li and Zhang~\cite{LiZhang2003}
investigated the corresponding subcritical problem and
established Liouville-type nonexistence results using the
moving planes method.

For general $1<m<n$, Zhou~\cite{YangZhou2024} extended the
critical classification to the quasilinear setting by
developing an approach based on the method of vector fields
and integration by parts. More recently, Yu and
Zhou~\cite{YuZhou2025} considered the corresponding subcritical
problem and established Liouville-type nonexistence results
in a range of subcritical exponents. Their approach modifies
the integral identities used in the critical classification
and shows that, under suitable conditions on $p$ and $q$,
problem~\eqref{eq:half-space} admits no nontrivial
nonnegative $C^2$ solutions in $\mathbb{R}^n_+$.

Critical elliptic equations on Riemannian manifolds have also
been extensively studied in connection with classification and
geometric rigidity. On compact Riemannian manifolds $(M^n,g)$ satisfying
$\operatorname{Ric}_g\geqslant (n-1)g$, consider the equation
\[
\Delta_g u-\lambda u+u^\alpha=0.
\]
Every positive solution must be constant if
\[
1<\alpha\leqslant \frac{n+2}{n-2},
\qquad
0<\lambda\leqslant \frac{n}{\alpha-1},
\]
with the equalities not holding simultaneously, unless
\[
\alpha=\frac{n+2}{n-2},
\qquad
\lambda=\frac{n(n-2)}{4},
\]
and $(M^n,g)$ is isometric to $(\mathbb{S}^n,g_c)$. In this
exceptional case, the equation admits nontrivial solutions of the form
\[
u(x)
=
\left(
\frac{\sqrt{n(n-2)}}
     {2\cosh t+2(\sinh t)\langle a,x\rangle}
\right)^{\frac{n-2}{2}},
\qquad
t\geqslant 0,\quad a\in\mathbb{S}^n.
\]
In the complete noncompact setting, Sun and Wang~\cite{SunWang2025}
recently investigated the critical $m$-Laplace equation
\[
-\Delta_m u=u^{m^*-1}
\qquad \text{on } (M^n,g),
\]
where $1<m<n$, $m^*=nm/(n-m)$, and $(M^n,g)$ has nonnegative
Ricci curvature. They obtained classification results for positive
solutions together with corresponding rigidity results for the
underlying manifold.

\textbf{Scaling and notation:} Four exponents recur in the proof:
\begin{equation}\label{notation}
 \tau=m-1-q,\qquad
 \gamma=1+\frac1\tau,\qquad
 \kappa=\frac{n-m}{m-1},\qquad
 b=\frac{\kappa}{\gamma}.
\end{equation}
Here $\tau$ measures the gap between $q$ and $m-1$; $\gamma$ is the power
in the denominator of \eqref{bubblefamily}; $\kappa$ is the decay
exponent of the $m$-harmonic fundamental solution; and $b$ is the scaling
exponent of \eqref{equation}. In particular,
\begin{equation}\label{naturalscale}
 u_\lambda(x)=\lambda^b u(\lambda x)
\end{equation}
solves the same equation. The critical relation can also be written as
\begin{equation}\label{identities}
 \begin{gathered}
 p=\tau+\frac{m-q}{b}>0,\qquad b\gamma=\kappa,\\
 \kappa p+(\kappa+1)q=n+\gamma,\qquad
 (\kappa+1)(m-1)=n-1.
 \end{gathered}
\end{equation}
Constants denoted by $C$ may change from line to line and depend only on
the indicated data. Balls without a specified center are centered at
the origin.

\textbf{The structure of the proof:} For $m=2$, Dou--Shi--Wu--Zhu \cite{DSWZ} proved a low-dimensional
classification in a restricted range of $q$ by means of an invariant
tensor and its associated scalar function. Our proof follows a scalar
maximum-principle approach. Its use of multipliers and first-contact
rescaling is inspired by Zhang \cite{Zhang2026}; the gradient term in
\eqref{equation} requires different identities and concentration
estimates.

Sections \ref{sec:regularity}--\ref{sec:critical} establish local
regularity, the scalar differential identity, and a maximum principle
across the critical set. A sharp bound for the scalar function then
classifies solutions that attain their global maximum
(Section \ref{sec:normalized}). The first-contact argument in
Section \ref{sec:contact} extends the resulting radial bound to larger
scales. Its two ingredients, a local mass estimate and the expansion
of an isolated $m$-harmonic pole, are stated in
Section \ref{sec:potential}. Finally, Section \ref{sec:harnack}
derives a Harnack estimate, proves that every entire solution is bounded
and attains its maximum, and completes the classification.

We use the classical interior regularity and weak comparison theory
for the $m$-Laplacian, together with the isolated-pole theorem of
Kichenassamy--V\'eron. The versions needed below are stated explicitly.
The arguments involving critical points do not require integration over
the boundary of the critical set.

\section{Local estimates and compactness}
\label{sec:regularity}

\subsection{Interior regularity and compactness}

For later blow-down arguments, we use the rescaled family
\begin{equation}\label{epsilon}
 -\Lapm w=\varepsilon w^p|Dw|^q,
 \qquad \varepsilon\geq0.
\end{equation}
The original equation has $\varepsilon=1$, while the relevant
large-scale rescalings have $\varepsilon\to0$.

For a $C^1$ function $f$, write
\[
 \calR(f)=\{|Df|>0\},
 \qquad
 \calS(f)=\overline{\calR(f)},
\]
where the closure is relative to the domain.

We shall use the standard interior regularity theory for quasilinear equations with $m$-growth; see
Tolksdorf \cite{Tolksdorf} and Lieberman \cite{Lieberman}.
We also use the strong minimum principle for $m$-superharmonic
functions \cite{HKM}.

\begin{proposition}\label{regularity}
Every positive, locally bounded weak solution of \eqref{epsilon}
has a positive $C_{\mathrm{loc}}^{1,\eta}$ representative for some
$\eta>0$, and it is smooth on $\calR(w)$.

Moreover, suppose that $\varepsilon_j$ is bounded and that the
corresponding solutions $w_j$ are locally uniformly bounded.
Then, after passing to a subsequence,
\[
 \varepsilon_j\longrightarrow\varepsilon,
 \qquad
 w_j\longrightarrow w
 \quad\text{in }C_{\mathrm{loc}}^1,
\]
where $w$ is a nonnegative weak solution of the limiting equation.
If the sequence has locally uniform positive lower bounds, then
$w>0$.
\end{proposition}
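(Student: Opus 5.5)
We treat $w$ as a weak solution of $-\Lapm w=f$ with right-hand side $f=\varepsilon w^p|Dw|^q$, and bootstrap the regularity of $f$. Since $w$ is locally bounded and $q<m-1$, Young's inequality gives, on any compact set,
\[
\varepsilon w^p|Dw|^q\le \varepsilon\|w\|_\infty^p\bigl(\delta|Dw|^{m-1}+C_\delta\bigr),
\]
so the equation has $m$-growth in the sense of Tolksdorf and Lieberman: the structure function $B(x,z,\xi)$ satisfies $|B|\le \Lambda(1+|\xi|^{m})$ with $\Lambda$ depending only on $\sup|\varepsilon|$ and the local bound of $w$.

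\textbf{Step 1: $C^{1,\eta}$ regularity and smoothness on $\calR(w)$.} By the $m$-growth structure, Tolksdorf's theorem \cite{Tolksdorf} (equivalently Lieberman \cite{Lieberman}) gives $w\in C^{1,\eta}_{\mathrm{loc}}$. The exponent $\eta$ and the $C^{1,\eta}(K)$ norm depend only on $n,m,q,p$, the bound on $\varepsilon$, $\|w\|_{L^\infty(K')}$, and $\dist(K,\partial K')$. On the open set $\calR(w)$ the operator is uniformly elliptic with $C^{0,\eta}$ coefficients $a_{ij}(Dw)$. The right-hand side is $C^{0,\eta}$ there, because $|Dw|>0$ and $w>0$, so the powers are smooth functions of $(w,Dw)$. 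Schauder theory applied to the difference quotients of $w$, followed by bootstrapping, then gives $C^\infty$.

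\textbf{Positivity.} We have $-\Lapm w\ge0$ weakly, so $w$ is $m$-superharmonic and nonnegative. By the strong minimum principle \cite{HKM}, the continuous representative is either identically zero or positive on each connected domain. The first option is excluded since $w>0$ a.e., so $w>0$ everywhere.

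\textbf{Step 2: Compactness.} Local uniform $L^\infty$ bounds together with the uniform Tolksdorf estimate give uniform $C^{1,\eta}$ bounds on compacts. By Arzel\`a--Ascoli and a diagonal argument over an exhaustion, we obtain $w_j\to w$ in $C^1_{\mathrm{loc}}$ and $\varepsilon_j\to\varepsilon$ along a subsequence. Passing to the limit in \eqref{weak} is immediate: $\Am(Dw_j)\to\Am(Dw)$ locally uniformly, and $\varepsilon_jw_j^p|Dw_j|^q\to\varepsilon w^p|Dw|^q$ locally uniformly, because $p,q>0$ make these continuous functions of $(w,Dw)$ on $[0,\infty)\times\mathbb R^n$. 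Hence $w\ge0$ is a weak solution of the limiting equation. If $w_j\ge c_K>0$ on each compact $K$, then $w\ge c_K$, so $w>0$.

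\textbf{Main obstacle.} The one delicate point is checking that the gradient term fits the structural hypotheses \emph{uniformly} in $j$. In particular, for $1<m<2$, Lieberman's setting needs growth $|\xi|^m$ of the lower-order term, and this is satisfied by the bound $|\xi|^q\le 1+|\xi|^{m-1}$. Near $w=0$ (only relevant in the limit), $w^p$ is merely continuous, but only boundedness is used. The smoothness claim on $\calR(w)$ requires first obtaining $W^{2,2}_{\mathrm{loc}}$ there via difference quotients before invoking Schauder theory; this is standard.
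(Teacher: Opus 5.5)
Your proposal is correct and follows essentially the same route as the paper. The bound $\varepsilon w^p|\xi|^q\le C(1+|\xi|^{m-1})$, which uses $q<m-1$, feeds the Tolksdorf--Lieberman interior $C^{1,\eta}$ estimates uniformly in the data; positivity comes from the strong minimum principle for $m$-superharmonic functions, smoothness on $\{|Dw|>0\}$ from bootstrapping, and compactness from Arzel\`a--Ascoli, with your extra details (difference quotients before Schauder, the Caccioppoli energy bound implicit in the uniform estimate, and passing to the limit in the weak formulation) simply making explicit what the paper calls standard.
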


\begin{proof}
On every compact set where $0\leq w\leq M$, the lower-order term
satisfies
\[
 \varepsilon w^p|\xi|^q
 \leq C_M\bigl(1+|\xi|^{m-1}\bigr),
\]
because $q<m-1$. Standard Caccioppoli estimates provide the required
local energy bounds. The interior $C^{1,\eta}$ estimates cited above
therefore apply, uniformly for locally bounded families.

The continuous representative is nonnegative and $m$-superharmonic.
The strong minimum principle shows that it is either strictly positive
or identically zero on each connected component; the latter is
excluded by positivity almost everywhere. On $\calR(w)$, the equation
is locally uniformly elliptic with smooth coefficients, and standard
bootstrapping yields smoothness. The compactness assertion follows from the Arzelà--Ascoli theorem.
\end{proof}

\subsection{A logarithmic gradient estimate}

\begin{lemma}\label{loggrad}
Let $w>0$ solve \eqref{epsilon} in $B_{2L}$, and let
\[
 M=\sup_{B_{2L}}w.
\]
Then
\begin{equation}\label{logbound}
 \sup_{B_L}|D\log w|
 \leqslant
 C(n,m,q)
 \left(
 L^{-1}+\varepsilon^{1/(m-q)}M^{1/b}
 \right).
\end{equation}
\end{lemma}

\begin{proof}
Set
\[
 f=\log w,
 \qquad
 F=|Df|^2.
\]
On the open set $\{F>0\}$, define the linearized matrix
\[
 \A=I+(m-2)\frac{Df\otimes Df}{F}.
\]
Since $Dw=wDf$, equation \eqref{epsilon} becomes
\begin{equation}\label{logeq}
 \A:D^2f
 =
 -(m-1)F
 -\varepsilon w^{p-\tau}F^{(1-\tau)/2}.
\end{equation}
Here $p-\tau=\frac{m-q}{b}>0$.

Differentiating \eqref{logeq} shows
\begin{equation}\label{logbochner}
 \begin{split}
 &\A:D^2F
 +
 \left[
 2(m-1)
 +(1-\tau)\varepsilon w^{p-\tau}F^{-(m-q)/2}
 \right]Df\cdot DF
 \\
 &\geqslant
 \frac{2}{n+m-2}\bigl(\A:D^2f\bigr)^2
 -C(m)F^{-1}|D^2f\,Df|^2
 -2(p-\tau)\varepsilon w^{p-\tau}F^{(3-\tau)/2}.
 \end{split}
\end{equation}
Indeed, in a frame with $e_1=Df/|Df|$, the quadratic Hessian terms
before applying Cauchy--Schwarz are
\[
 (m-1)f_{11}^2
 +(4-m)\sum_{i>1}f_{1i}^2
 +\sum_{i,j>1}f_{ij}^2.
\]
The diagonal terms control
$\bigl(\A:D^2f\bigr)^2/(n+m-2)$, while the remaining mixed terms are
bounded below by
\[
 -C(m)F^{-1}|D^2f\,Df|^2.
\]

Choose a smooth function $\eta$ supported in $B_{2L}$ such that
\[
 0\leqslant\eta\leqslant1,
 \qquad
 \eta=1\quad\text{in }B_L,\qquad|D\eta|\leqslant C L^{-1},
 \qquad
 |D^2\eta|\leqslant C L^{-2}.
\]
Consider a positive maximum point of $\eta^2F$. At this point,
\begin{equation}\label{maximumrelations}
 DF=-2F\frac{D\eta}{\eta},
 \qquad
 D^2f\,Df=-F\frac{D\eta}{\eta}.
\end{equation}

If $\eta^2F
 \leqslant
 C\varepsilon^{2/(m-q)}M^{2/b}$, the desired estimate already follows. Otherwise, $ \varepsilon w^{p-\tau}F^{-(m-q)/2}$ is sufficiently small at the maximum point. Since
\[
 \varepsilon w^{p-\tau}F^{(3-\tau)/2}
 =
 \left(
 \varepsilon w^{p-\tau}F^{-(m-q)/2}
 \right)F^2
\]
and
\[
 |\A:D^2f|\geqslant(m-1)F
\]
by \eqref{logeq}, the last term in \eqref{logbochner} can be absorbed
by its positive quadratic term.

Using \eqref{maximumrelations}, the cutoff bounds, and
$\A:D^2(\eta^2F)\leqslant0$ at the maximum point, we obtain
\[
 (\eta^2F)^2
 \leqslant
 C\left[
 L^{-2}(\eta^2F)
 +L^{-1}(\eta^2F)^{3/2}
 \right].
\]
Consequently,
\[
 \eta^2F\leqslant C L^{-2}.
\]

Combining the two cases yields
\[
 \eta^2F
 \leqslant
 C\left(
 L^{-2}
 +\varepsilon^{2/(m-q)}M^{2/b}
 \right).
\]
Since $\eta=1$ in $B_L$, taking square roots proves
\eqref{logbound}. The estimate extends to $\{F=0\}$ by continuity.
\end{proof}

Consequently, if $\{w_j\}$ is a family of positive solutions with
locally uniform upper bounds, bounded parameters $\varepsilon_j$, and
$w_j(x_0)\geqslant c_0>0$ at some fixed point $x_0$, then integrating
\eqref{logbound} along a finite chain of overlapping balls yields a
uniform positive lower bound for $w_j$ on every compact subset.

\label{sec:potential}

\subsection{A local mass estimate}

\begin{lemma}\label{massbound}
Let $w>0$ be a $C^1$ weak solution in $B_{8r}(x_0)$ of
\[
 -\Lapm w=\mu\geqslant0,
\]
where $\mu$ is a Radon measure. Then
\begin{equation}\label{morrey}
 \mu(B_r(x_0))
 \leqslant
 C(n,m)r^{n-m}w(x_0)^{m-1}.
\end{equation}
\end{lemma}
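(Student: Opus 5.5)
The estimate \eqref{morrey} is a standard Kilpeläinen--Malý type bound for nonnegative $m$-superharmonic functions, and I would prove it by testing the equation against a cutoff times a power of $w$. Throughout I write $B_s=B_s(x_0)$. There are two steps. First I bound $\mu(B_r)$ by a local energy of $w$ through a Caccioppoli argument. Then I convert that energy into a bound in terms of $\inf_{B_{2r}} w$ using the weak Harnack inequality for supersolutions. Continuity then lets me replace the infimum by $w(x_0)$, since trivially $\inf_{B_{2r}}w\leqslant w(x_0)$; this is what I need if the inequality is arranged with the infimum on the right.

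For the first step, take $\eta\in C_c^\infty(B_{2r})$ with $\eta=1$ on $B_r$ and $|D\eta|\leqslant C/r$. Set $\lambda=\inf_{B_{2r}}w>0$, which is positive because $w$ is continuous and positive. Test with $\varphi=\eta^m$, which is admissible since $w\in C^1$ and $\mu$ is a Radon measure. This gives
\[
\mu(B_r)\leqslant\int\eta^m\,d\mu=\int|Dw|^{m-2}Dw\cdot D(\eta^m)\dd\leqslant \frac{C}{r}\int_{B_{2r}}\eta^{m-1}|Dw|^{m-1}\dd .
\]
To control $\int_{B_{2r}}\eta^{m-1}|Dw|^{m-1}$, insert the weight $w^{(1+\delta)(m-1)/m}$ and apply Hölder with a small $\delta\in(0,1)$:
\[
\int\eta^{m-1}|Dw|^{m-1}\leqslant\Big(\int\eta^m|Dw|^m w^{-1-\delta}\Big)^{\frac{m-1}{m}}\Big(\int_{B_{2r}} w^{(1+\delta)(m-1)}\Big)^{\frac1m}.
\]

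The weighted energy is handled by testing with $\varphi=\eta^m w^{-\delta}$, which is legitimate because $w\geqslant\lambda>0$ on the support. The measure term then has the good sign: $\int\eta^m w^{-\delta}\,d\mu\geqslant0$. Using Young's inequality this gives
\[
\delta\int\eta^m|Dw|^m w^{-1-\delta}\leqslant C\int|D\eta|^m w^{m-1-\delta}\leqslant Cr^{-m}\int_{B_{2r}}w^{m-1-\delta}.
\]
Combining the two displays, I get
\[
\mu(B_r)\leqslant Cr^{-1}\,r^{-(m-1)}\Big(\int_{B_{2r}} w^{m-1-\delta}\Big)^{\frac{m-1}{m}}\Big(\int_{B_{2r}} w^{(1+\delta)(m-1)}\Big)^{\frac1m}.
\]

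The second step, which I expect to be the main obstacle, is to bound these integral means by $\inf w$. The weak Harnack inequality for nonnegative $m$-superharmonic functions (Trudinger, or \cite{HKM}) states that
\[
\Big(\fint_{B_{2r}}w^s\Big)^{1/s}\leqslant C\inf_{B_{r}}w\quad\text{for }0<s<\frac{n(m-1)}{n-m},
\]
applied on $B_{8r}$ scaled suitably. Both exponents $m-1-\delta$ and $(1+\delta)(m-1)$ lie below $\frac{n(m-1)}{n-m}$ once $\delta$ is small, which uses $m<n$. So I fix such a $\delta=\delta(n,m)$. Each integral is then at most $Cr^n(\inf_{B_r}w)^{\text{exponent}}$, and the powers combine to
\[
\mu(B_r)\leqslant Cr^{-m}\,r^{n}\,(\inf w)^{\frac{(m-1-\delta)(m-1)+(1+\delta)(m-1)}{m}}=Cr^{n-m}(\inf w)^{m-1}.
\]
Since $\inf_{B_r}w\leqslant w(x_0)$, this is \eqref{morrey}.

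The delicate point is that the weak Harnack inequality applies to supersolutions with a merely measure-valued right-hand side. This holds because $w$ is a continuous $m$-superharmonic function, and the inequality is proved by Moser iteration using only the sign of $\mu$, exactly as in the weighted test above. The enlarged domain $B_{8r}$ supplies room for the Harnack balls.
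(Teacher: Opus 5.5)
Your proposal is correct and follows essentially the same route as the paper: you test with $\eta^m$ to reduce $\mu(B_r)$ to $r^{-1}\int|Dw|^{m-1}$, split by H\"older against the weighted energy $\int w^{-1-\delta}|Dw|^m$ controlled through the test function $\eta^m w^{-\delta}$, and close with the weak Harnack inequality at the two exponents $m-1-\delta$ and $(1+\delta)(m-1)$. The paper's $t$ is your $\delta$, with the explicit constraint $t<\min\{m-1,\frac{m}{n-m}\}$ (which is what your ``$\delta$ small'' has to guarantee, including $\delta<m-1$ when $m<2$), and the only cosmetic difference is that you carry a single cutoff weighted by $\eta^{m-1}$ through the H\"older step, whereas the paper uses nested cutoffs on $B_{3r}$ and $B_{2r}$.
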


\begin{proof}
The weak Harnack inequality for nonnegative $m$-superharmonic
functions \cite{HKM} states that
\begin{equation}\label{weakH}
 \left(
 \frac{1}{|B_{4r}(x_0)|}
 \int_{B_{4r}(x_0)}w^s\dd
 \right)^{1/s}
 \leqslant
 C\inf_{B_{2r}(x_0)}w
 \leqslant Cw(x_0)
\end{equation}
for every
\[
 0<s<\frac{n(m-1)}{n-m}.
\]

Choose
\[
 0<t<\min\left\{m-1,\frac{m}{n-m}\right\}.
\]
Then both $m-1-t$ and $(t+1)(m-1)$ are admissible exponents in
\eqref{weakH}. Let $\eta$ be supported in $B_{3r}(x_0)$, equal to one
on $B_{2r}(x_0)$, and satisfy $|D\eta|\leqslant C/r$. Testing the
equation with $w^{-t}\eta^m$ and applying Young's inequality, we obtain
\[
 \int_{B_{2r}(x_0)}
 w^{-t-1}|Dw|^m\dd
 \leqslant
 Cr^{-m}
 \int_{B_{3r}(x_0)}w^{m-1-t}\dd.
\]
Hölder's inequality and \eqref{weakH} now imply
\[
\begin{aligned}
 \int_{B_{2r}(x_0)}|Dw|^{m-1}\dd
 &\leqslant
 \left(
 \int_{B_{2r}(x_0)}
 w^{-t-1}|Dw|^m\dd
 \right)^{(m-1)/m}
 \\
 &\quad\times
 \left(
 \int_{B_{2r}(x_0)}
 w^{(t+1)(m-1)}\dd
 \right)^{1/m}
 \\
 &\leqslant
 Cr^{n-m+1}w(x_0)^{m-1}.
\end{aligned}
\]
Finally, a cutoff supported in $B_{2r}(x_0)$, equal to one on
$B_r(x_0)$, and with gradient bounded by $C/r$ yields
\[
 \mu(B_r(x_0))
 \leqslant
 \frac{C}{r}
 \int_{B_{2r}(x_0)}|Dw|^{m-1}\dd.
\]
This proves \eqref{morrey}.
\end{proof}

\subsection{The finite part of an isolated
\texorpdfstring{$m$}{m}-harmonic pole}

\begin{lemma}[An isolated $m$-harmonic pole]\label{pole}
Let $W>0$ be $m$-harmonic in $B_L\setminus\{0\}$ and suppose that
\[
 W(x)\leqslant C|x|^{-\kappa}
\]
near the origin. Then either the singularity is removable or
\begin{equation}\label{poleexp}
 W(x)=A|x|^{-\kappa}+\beta+h(x),
 \qquad
 A>0,\quad \beta\in\mathbb R,\quad h(x)\longrightarrow0.
\end{equation}
In the latter case,
\begin{equation}\label{poleder}
 r\sup_{|x|=r}|Dh(x)|\longrightarrow0,
 \qquad
 -\Lapm W
 =
 |\mathbb S^{n-1}|(\kappa A)^{m-1}\delta_0
\end{equation}
in the sense of distributions. The remainder $h$ need not be
$m$-harmonic.
\end{lemma}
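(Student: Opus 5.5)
The plan is to reduce to the isolated-singularity theorem of Kichenassamy--V\'eron, as announced in the introduction. For a positive $m$-harmonic function $W$ in a punctured ball with $W\leqslant C|x|^{-\kappa}$, that theorem gives a dichotomy. Either $W$ extends to an $m$-harmonic function across $0$, or there is a constant $A>0$ with
\[
W(x)/\bigl(A|x|^{-\kappa}\bigr)\longrightarrow1
\quad\text{as } x\to0 .
\]
In the second case it also gives $-\Lapm W=|\mathbb S^{n-1}|(\kappa A)^{m-1}\delta_0$ in $\mathcal D'(B_L)$. Here $|\mathbb S^{n-1}|(\kappa A)^{m-1}$ is exactly the flux of $\Am(D(A|x|^{-\kappa}))$ through any sphere, since $|D(A|x|^{-\kappa})|=\kappa A|x|^{-\kappa-1}$ and $(\kappa+1)(m-1)=n-1$. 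So I would take the dichotomy and the distributional identity in \eqref{poleder} directly from the cited theorem. The real work is the refinement from $W\sim A|x|^{-\kappa}$ to the finite-part expansion \eqref{poleexp}, together with the derivative decay of $h$.

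For the refinement I would use a scaling argument around the fundamental solution $\Gamma=A|x|^{-\kappa}$. Set $W_r(y)=r^{\kappa}W(ry)$ on the annulus $\{1/2<|y|<2\}$. These functions are $m$-harmonic and converge uniformly to $A|y|^{-\kappa}$. That limit has nonvanishing gradient on the annulus, so the equations are uniformly elliptic there. Proposition~\ref{regularity}-type estimates then upgrade the convergence to $C^{2}$ (indeed $C^k$) on $\{3/4<|y|<3/2\}$. Consequently $D W(x)=D\Gamma(x)(1+o(1))$, and $W-\Gamma$ solves a linear uniformly elliptic equation obtained by linearizing $\Am$ between $D\Gamma$ and $DW$:
\[
\Div\bigl(\mathsf B(x)\,D(W-\Gamma)\bigr)=0,
\qquad
\mathsf B(x)=\int_0^1 D\Am\bigl(D\Gamma+s\,D(W-\Gamma)\bigr)\,ds .
\]
The coefficient $\mathsf B$ is asymptotically the homogeneous matrix $|D\Gamma|^{m-2}\bigl(I+(m-2)\hat x\otimes\hat x\bigr)$. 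After dividing by $|D\Gamma|^{m-2}$ and rescaling to annuli, this is a small perturbation of a fixed homogeneous elliptic operator.

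The expected main obstacle is proving that $W-\Gamma$ actually has a limit $\beta$, rather than merely being $o(|x|^{-\kappa})$. I would first try a comparison argument. For small $\delta>0$, the functions $\Gamma_\pm=(A\pm\delta)|x|^{-\kappa}$ plus constants are $m$-harmonic, and comparison on annuli $\{\rho<|x|<r_0\}$ with $\rho\to0$ shows that $\operatorname{osc}_{|x|=r}(W-\Gamma)$ stays bounded. Next I would apply Harnack and Schauder estimates to the linearized equation. The homogeneous kernel solutions of the limiting operator behave like $|x|^{0}$, namely constants, and like growing or decaying powers. Since $W-\Gamma=o(|x|^{-\kappa})$ excludes the modes comparable to $|x|^{-\kappa}$, and $A$ already absorbs the radial mode, this should force $W-\Gamma=\beta+o(1)$. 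If the comparison bound proves too weak, the fallback is an iteration of the oscillation on dyadic annuli. The decaying Hölder coefficient of the perturbation yields a geometric decrease of $\operatorname{osc}_{\partial B_{2^{-k}}}(W-\Gamma-c_k)$, hence a Cauchy sequence $c_k\to\beta$.

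Finally, once $h=W-\Gamma-\beta\to0$ is established, the estimate $r\sup_{|x|=r}|Dh|\to0$ would follow from interior gradient estimates for the linear equation satisfied by $h$. These are applied on the rescaled annuli $\{r/2<|x|<2r\}$, where after scaling the coefficients are uniformly elliptic and bounded, and the $L^\infty$ norm of $h$ tends to zero. I would also note explicitly, as the statement stresses, that $h$ itself need not be $m$-harmonic: only $W$ is, and all the estimates for $h$ go through the linearized equation.
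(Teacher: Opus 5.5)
The parts of your plan outside the finite-part step are sound. Your derivative bound for $h$ matches the paper: linearize $\Am$ between $D\Gamma$ and $DW$, rescale to annuli, and apply interior gradient estimates. Getting $DW=D\Gamma(1+o(1))$ by rescaling and compactness is fine, and so is taking the Dirac identity from Kichenassamy--V\'eron (the paper computes the flux through small spheres instead).

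\textbf{The gap} is the step you flag yourself: the existence of the finite part $\beta$.

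\begin{itemize}
\item The mode count for the limiting homogeneous operator is only a heuristic. The true coefficients approach the homogeneous ones at no known rate, since you only know $W_r\to A|y|^{-\kappa}$ with an $o(1)$ error.
\item The fallback is asserted rather than derived. A decaying but rate-free coefficient perturbation does not by itself give a fixed contraction factor for the oscillation. That would need a compactness or three-annulus argument built on the spectral gap of the limiting operator.
\item Harnack on a single annulus only compares values within that annulus. To carry control across scales you need a maximum principle across the puncture.
\end{itemize}

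The paper does not prove this step. It cites Kichenassamy--V\'eron, Theorem 1.1 with Remark 1.4 and the erratum, which already give $W=A|x|^{-\kappa}+\beta+o(1)$ and $|x|^{\kappa+1}|D(W-A|x|^{-\kappa})|\to0$. It then only sharpens the derivative bound and computes the flux. You could simply cite the same result.

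\textbf{A self-contained fix.} Your barriers already supply the missing maximum principle. Let $M^+(R)=\max_{\partial B_R}(W-\Gamma)$ and $M^-(R)=\min_{\partial B_R}(W-\Gamma)$.

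Compare $W$ on $\{\rho<|x|<R\}$ with $(A+\delta)|x|^{-\kappa}+M^+(R)-\delta R^{-\kappa}$. This barrier dominates $W$ on $\partial B_\rho$ for small $\rho$, because $W-\Gamma=o(|x|^{-\kappa})$. Letting $\rho\to0$ and then $\delta\to0$ gives $W-\Gamma\leqslant M^+(R)$ on $B_R\setminus\{0\}$. The barrier $(A-\delta)|x|^{-\kappa}+M^-(R)+\delta R^{-\kappa}$ gives $W-\Gamma\geqslant M^-(R)$ in the same way. Hence $M^+$ is nondecreasing and $M^-$ is nonincreasing in $R$.

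The functions $M^+(2r)-(W-\Gamma)$ and $(W-\Gamma)-M^-(2r)$ are nonnegative solutions of your linearized equation in $B_{2r}\setminus\{0\}$. After rescaling, that equation is uniformly elliptic with bounded coefficients, uniformly in small $r$. The Harnack inequality on the sphere $\partial B_r$ gives
\[
M^+(2r)-M^-(r)\leqslant C\bigl(M^+(2r)-M^+(r)\bigr),
\qquad
M^+(r)-M^-(2r)\leqslant C\bigl(M^-(r)-M^-(2r)\bigr).
\]
Adding the two yields $\omega(r)\leqslant\frac{C-1}{C+1}\,\omega(2r)$ for $\omega=M^+-M^-$. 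Thus $\omega(r)\to0$, the monotone limits of $M^\pm$ coincide, and their common value is $\beta$. No convergence rate is needed.

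\textbf{A minor point in the gradient step.} Bounded coefficients only give H\"older bounds for $h$. The interior gradient estimate needs uniform H\"older control of the rescaled coefficients, which you do have from the $C^k$ convergence of $W_r$.
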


\begin{proof}
The expansion \eqref{poleexp} and the first-order asymptotic
\[
 |x|^{\kappa+1}
 \left|
 D\bigl(W-A|x|^{-\kappa}\bigr)
 \right|
 \longrightarrow0
\]
follow from the isolated-singularity theorem of
Kichenassamy--V\'eron \cite[Theorem 1.1 and Remark 1.4]{KV} and \cite{KVerr}; see also
\cite[Proposition 4.3]{Zhang2026}. It remains to sharpen the derivative
estimate for the bounded remainder.

Let
\[
 \Phi(x)=A|x|^{-\kappa}.
\]
Since both $W$ and $\Phi+\beta$ are $m$-harmonic away from the origin,
the function $h=W-\Phi-\beta$ satisfies
\[
 \Div\left[
 \left(
 \int_0^1
 D\Am(D\Phi+sDh)\,ds
 \right)Dh
 \right]=0.
\]
On every fixed annulus $\{\frac12<|y|<2\}$, the rescaled gradients $r^{\kappa+1}DW(ry)$ are uniformly H\"older continuous and converge to $D\bigl(A|y|^{-\kappa}\bigr)$, which is nonzero there. After multiplication by
$r^{(\kappa+1)(m-2)}$, the coefficient matrices in the rescaled
equation for $h(ry)$ are therefore uniformly elliptic with uniform
H\"older bounds.

The interior gradient estimate for linear divergence-form equations
\cite{GT} yields
\[
 \sup_{|y|=1}|D_yh(ry)|
 \leqslant
 C\sup_{1/2<|y|<2}|h(ry)|.
\]
The right-hand side tends to zero by \eqref{poleexp}. Since
\[
 D_yh(ry)=rDh(ry),
\]
the first assertion of \eqref{poleder} follows.

Finally, on $\partial B_r$,
\[
 \Am(DW)\cdot\frac{x}{|x|}
 =
 -(\kappa A)^{m-1}r^{1-n}
 +o(r^{1-n}).
\]
Let $\varphi\in C_c^\infty(B_L)$. Integrating by parts on
$B_R\setminus\overline{B_\varepsilon}$, where
$\operatorname{supp}\varphi\Subset B_R\Subset B_L$, we obtain
\[
 \int_{B_R\setminus B_\varepsilon}
 \Am(DW)\cdot D\varphi\,dx
 =
 -\int_{\partial B_\varepsilon}
 \varphi\,\Am(DW)\cdot\frac{x}{|x|}\,dS.
\]
Substituting the asymptotic formula above into the boundary integral
and letting $\varepsilon\downarrow0$, we conclude that
\[
 -\Lapm W
 =
 |\mathbb S^{n-1}|(\kappa A)^{m-1}\delta_0
\]
in the sense of distributions.
\end{proof}

\section{The scalar identity and maximum principles}
\label{sec:scalar}

\subsection{The transformation and the weighted-trace identity}

Let
\[
v=u^{-1/b},\qquad z=|Dv|,
\]
and define
\begin{equation}\label{coefficients}
B=(m-1)(b+1),\qquad
\mathfrak D=(m-q)B=(m-1)+(n-1)\tau,\qquad
c=b^{-\tau}.
\end{equation}
The critical relation implies
\[
p+1=\frac{\mathfrak D}{(m-1)b}.
\]
We consider
\begin{equation}\label{vP}
P=\frac{Bz^{m-q}+c}{v}.
\end{equation}

The weak equation for $v$ is
\begin{equation}\label{veq}
v\Lapm v=Bz^m+cz^q,
\qquad
\Lapm v=Pz^q.
\end{equation}
On $\{z>0\}$, let
\begin{equation}\label{tensors}
\nu=\frac{Dv}{z},\qquad
\A=I+(m-2)\nu\otimes\nu,\qquad
T=D^2v-\frac{Pz^{1-\tau}}{\mathfrak D}
\bigl(\tau I+(1-\tau)\nu\otimes\nu\bigr).
\end{equation}
In a frame with $e_1=\nu$, equation \eqref{veq} and
differentiation of \eqref{vP} yield
\begin{equation}\label{weightedtrace}
(m-1)T_{11}+\sum_{i>1}T_{ii}=0,
\qquad
vDP=\mathfrak D z^{\tau-1}T Dv.
\end{equation}
Thus $T$ is trace-free with respect to the linearized matrix $\A$.

Define
\begin{equation}\label{operator}
\LL\psi=
z^{2-m}\Div\bigl(z^{m-2}\A D\psi\bigr)+
\left(
\frac{2(m-1)-\mathfrak D}{v}
-qPz^{-(m-q)}
\right)Dv\cdot D\psi.
\end{equation}
This operator is locally uniformly elliptic on $\{z>0\}$.

\begin{proposition}\label{scalaridentity}
On $\{z>0\}$,
\begin{equation}\label{Pidentity}
\LL P=\frac{\mathfrak D}{v}z^{\tau-1}
\bigg[
(m-1)\tau T_{11}^2
+(2(m-1)-q)\sum_{i>1}T_{1i}^2
+\sum_{i,j>1}T_{ij}^2
\bigg].
\end{equation}
In particular,
\begin{equation}\label{Qpositive}
\LL P\geqslant
\frac{\mathfrak D}{v}z^{\tau-1}
\left[
\frac{(m-1)\mathfrak D}{n-1}T_{11}^2
+(2(m-1)-q)\sum_{i>1}T_{1i}^2
\right],
\end{equation}
and hence
\begin{equation}\label{Pgradient}
\LL P\geqslant
vz^{-(m-q)}
\left[
\frac{m-1}{n-1}(\nu\cdot DP)^2
+
\frac{2(m-1)-q}{\mathfrak D}
|DP-(\nu\cdot DP)\nu|^2
\right].
\end{equation}
\end{proposition}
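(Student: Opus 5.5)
The plan is to compute $\LL P$ directly on $\{z>0\}$, where $v$ is smooth by Proposition~\ref{regularity}. The computation is a Bochner-type argument in the adapted frame $e_1=\nu$. It runs in three stages.

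\textbf{Step 1: first derivatives of $P$.} From $vP=Bz^{m-q}+c$ I get
\[
vDP+PDv=(m-q)Bz^{m-q-2}D^2v\,Dv,
\]
which in the frame is $vP_k=\mathfrak D z^{\tau}v_{1k}-Pz\,\delta_{1k}$. Substituting $D^2v=T+\frac{Pz^{1-\tau}}{\mathfrak D}(\tau I+(1-\tau)\nu\otimes\nu)$ should reproduce the second relation in \eqref{weightedtrace}, namely $vP_k=\mathfrak D z^{\tau}T_{1k}$. The $\delta_{1k}$ terms cancel exactly because the $\nu\otimes\nu$ coefficient is $\tau+(1-\tau)=1$. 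The first relation in \eqref{weightedtrace} comes from expanding $\Lapm v=z^{m-2}\A:D^2v=Pz^q$. The trace of the subtracted tensor against $\A$ is
\[
\frac{Pz^{1-\tau}}{\mathfrak D}\bigl((n-1)\tau+(m-1)\bigr)=Pz^{1-\tau}=Pz^{q+2-m},
\]
so $\A:T=0$.

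\textbf{Step 2: second derivatives.} I would differentiate the relation $vDP=\mathfrak D z^{\tau-1}TDv$ once more and contract with $z^{2-m}\Div(z^{m-2}\A\,\cdot)$. The cleanest route is to write
\[
z^{m-2}\A DP=\frac{\mathfrak D}{v}z^{m-3+\tau}\A T Dv
\]
and take its divergence. Third derivatives of $v$ appear through $\Div(\A T)$-type terms. These are eliminated by differentiating the equation $z^{m-2}\A:D^2v=Pz^q$ in the direction $Dv$ (equivalently, commuting derivatives $v_{ijk}=v_{ikj}$). After this elimination, what remains is a quadratic form in $D^2v$, hence in $T$ and $P$, plus first-order terms in $DP$ and $Dv$. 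The drift coefficient in \eqref{operator},
\[
\frac{2(m-1)-\mathfrak D}{v}-qPz^{-(m-q)},
\]
is chosen precisely so that the terms linear in $T_{1k}$, which are multiples of $P_k$ by Step 1, cancel. The pure $P^2$ terms should also cancel; this is where the critical relation $p+1=\mathfrak D/((m-1)b)$ enters. I expect the coefficients $(m-1)\tau$, $2(m-1)-q$ and $1$ to arise as follows. $T_{11}^2$ picks up $(m-1)$ from $\A_{11}$, reduced by the $z^{q}$-degree correction. $T_{1i}^2$ collects $(m-1)+1$ from the two mixed contractions, minus $q$ from differentiating $z^q$. $T_{ij}^2$ for $i,j>1$ has coefficient $1$.

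This bookkeeping is the main obstacle. To organize it, I would first verify the identity in the radial model, where $T=0$ and $P$ is constant, to pin down the constant and $P$-dependent terms. Only then would I track the $T$-quadratic terms, frame-wise.

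\textbf{Step 3: the inequalities.} For \eqref{Qpositive}, use $\sum_{i>1}T_{ii}=-(m-1)T_{11}$ and Cauchy--Schwarz:
\[
\sum_{i,j>1}T_{ij}^2\geqslant\frac{(m-1)^2}{n-1}T_{11}^2 .
\]
Then
\[
(m-1)\tau+\frac{(m-1)^2}{n-1}=\frac{(m-1)\mathfrak D}{n-1}.
\]
For \eqref{Pgradient}, substitute $T_{1k}=vP_k/(\mathfrak D z^{\tau})$ from Step 1. Then $\nu\cdot DP=P_1$ and $|DP-(\nu\cdot DP)\nu|^2=\sum_{i>1}P_i^2$, so
\[
\frac{\mathfrak D}{v}z^{\tau-1}\cdot\frac{v^2}{\mathfrak D^2z^{2\tau}}=\frac{v}{\mathfrak D}z^{-\tau-1}=\frac{v}{\mathfrak D}z^{-(m-q)} .
\]
Multiplying out gives exactly the stated coefficients $\frac{m-1}{n-1}$ and $\frac{2(m-1)-q}{\mathfrak D}$.
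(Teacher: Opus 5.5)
Your Steps 1 and 3 are correct and match the paper's derivation of \eqref{weightedtrace}, \eqref{Qpositive} and \eqref{Pgradient}. The gap is Step 2, which is the entire content of \eqref{Pidentity]}.

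You predict its outcome but never derive the coefficients. The radial check you propose only sees $T=0$, so it cannot fix the quadratic coefficients. The accounting you sketch is also wrong.

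Your divergence route is the paper's computation in disguise. Since $T\,Dv=D^2v\,Dv-\frac{Pz^{1-\tau}}{\mathfrak D}Dv$ and $D^2v\,Dv=z\,Dz$, one has $\frac{\mathfrak D}{v}z^{m-3+\tau}\A T\,Dv=\frac1v z^{m-2}\A\bigl(B\,D(z^{m-q})-P\,Dv\bigr)$. Everything therefore hinges on a Bochner-type formula for $\LL_0\psi=z^{2-m}\Div(z^{m-2}\A D\psi)$ applied to a power of $z$.

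To get it, differentiate the equation: $\Div(z^{m-2}\A Dv_i)=z^qP_i+qPz^{q-1}z_i$, which holds because $z^{m-2}\A=D\Am(Dv)$. Contract this with $v_i$ and use $D(z^s)=s z^{s-2}D^2v\,Dv$. In the frame $e_1=\nu$ one finds
\[
\LL_0(z^{s})=s z^{s-2}\Bigl[(m-1)(s-1)v_{11}^2+(s+m-2)\sum_{i>1}v_{1i}^2+\sum_{i,j>1}v_{ij}^2\Bigr]+s z^{s+q-m}\bigl(Dv\cdot DP+qPv_{11}\bigr).
\]
For $s=m-q$ this gives the coefficients $(m-1)\tau$ and $2(m-1)-q$. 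In the mixed coefficient, the $m=(m-1)+1$ comes from $\operatorname{tr}(D^2v\,\A\,D^2v)$. The remaining $m-q-2$ comes from differentiating the weight $z^{s-2}$. Your count ``$(m-1)+1$ minus $q$'' gives $m-q$, which is wrong unless $m=2$.

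Next, apply the product rule $\LL_0(vP)=v\LL_0P+P\LL_0v+2(m-1)Dv\cdot DP$ to $vP=Bz^{m-q}+c$, using $B(m-q)=\mathfrak D$ and $\LL_0v=(m-1)Pz^{1-\tau}$. Then substitute $v_{11}=T_{11}+Pz^{1-\tau}/\mathfrak D$, $v_{1i}=T_{1i}$, and $v_{ij}=T_{ij}+\tau Pz^{1-\tau}\delta_{ij}/\mathfrak D$ for $i,j>1$. Three separate cancellations are needed, and your sketch attributes them incorrectly.
\begin{itemize}
\item The $P$--$T$ cross terms inside the bracket, $2\tau\frac{Pz^{1-\tau}}{\mathfrak D}\bigl((m-1)T_{11}+\sum_{i>1}T_{ii}\bigr)$, vanish by the trace-free relation $\A:T=0$. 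You credit the drift with all linear terms, but the drift has only a $Dv$-component. It sees only $Dv\cdot DP=\frac{\mathfrak D}{v}z^{\tau+1}T_{11}$ and cannot absorb $T_{ii}$ for $i>1$.
\item The pure $P^2$ terms reduce to $\tau P^2z^{1-\tau}$ using $\mathfrak D=(m-1)+(n-1)\tau$; this is where criticality enters. Together with the remaining $P^2$ contributions, they total $(\tau+q-(m-1))P^2z^{1-\tau}=0$.
\item Only the first-order terms $(\mathfrak D-2(m-1))Dv\cdot DP+\mathfrak D qPT_{11}$ are removed by the drift in \eqref{operator}. Here $\mathfrak D qPT_{11}=qPvz^{-(m-q)}Dv\cdot DP$ by your Step 1.
\end{itemize}
With these computations supplied, your Step 3 finishes the proof.
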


\begin{proof}
For this calculation, denote the divergence part of $\LL$ by
\[
\LL_0\psi
=
z^{2-m}\Div\bigl(z^{m-2}\A D\psi\bigr).
\]
Fix a point in $\{z>0\}$ and choose an orthonormal frame with
$e_1=\nu$. At this point,
\[
v_1=z,\qquad v_i=0\quad(i>1),\qquad z_i=v_{1i}.
\]

Differentiate
\[
\Lapm v=Pz^q
\]
with respect to $x_i$. The linearized equation is
\[
\Div\bigl(z^{m-2}\A Dv_i\bigr)
=
z^qP_i+qPz^{q-1}z_i.
\]
Using this identity in the differentiation of $z^{m-q}$, we obtain
\begin{equation}\label{L0power}
\begin{split}
\LL_0(z^{m-q})
={}&(m-q)\Bigg\{
z^{\tau-1}\bigg[
(m-1)\tau v_{11}^2
+(2(m-1)-q)\sum_{i>1}v_{1i}^2
\\
&\hspace{42mm}
+\sum_{i,j>1}v_{ij}^2
\bigg]
+Dv\cdot DP+qPv_{11}
\Bigg\}.
\end{split}
\end{equation}
Moreover, $\A Dv=(m-1)Dv$, and hence \eqref{veq} implies
\begin{equation}\label{L0v}
\LL_0v=(m-1)Pz^{1-\tau}.
\end{equation}

Apply $\LL_0$ to
\[
vP=Bz^{m-q}+c.
\]
By the product rule,
\[
\LL_0(vP)
=
v\LL_0P+P\LL_0v+2(m-1)Dv\cdot DP.
\]
Since $B(m-q)=\mathfrak D$, equations \eqref{L0power} and
\eqref{L0v} lead to
\begin{equation}\label{Pintermediate}
\begin{split}
v\LL_0P
={}&
\bigl(\mathfrak D-2(m-1)\bigr)Dv\cdot DP
+\mathfrak D qPv_{11}
-(m-1)P^2z^{1-\tau}
\\
&+
\mathfrak D z^{\tau-1}\bigg[
(m-1)\tau v_{11}^2
+(2(m-1)-q)\sum_{i>1}v_{1i}^2
+\sum_{i,j>1}v_{ij}^2
\bigg].
\end{split}
\end{equation}

From the definition of $T$,
\[
v_{11}
=
T_{11}+\frac{Pz^{1-\tau}}{\mathfrak D},
\qquad
v_{1i}=T_{1i}\quad(i>1),\qquad v_{ij}
=
T_{ij}
+\frac{\tau Pz^{1-\tau}}{\mathfrak D}\delta_{ij},
\qquad (i,j>1).
\]
After these expressions are inserted into the square bracket in
\eqref{Pintermediate}, the terms linear in $T$ are
\[
2\tau\frac{Pz^{1-\tau}}{\mathfrak D}
\left(
(m-1)T_{11}+\sum_{i>1}T_{ii}
\right).
\]
They vanish by the identity
\eqref{weightedtrace}.

The terms containing only $P$ combine as
\[
\bigl(\tau+q-(m-1)\bigr)P^2z^{1-\tau},
\]
which is zero because $\tau+q=m-1$. Therefore,
\begin{equation}\label{PbeforeDrift}
\begin{split}
v\LL_0P
={}&
\bigl(\mathfrak D-2(m-1)\bigr)Dv\cdot DP+
\mathfrak D qPT_{11}
\\
&+
\mathfrak D z^{\tau-1}\bigg[
(m-1)\tau T_{11}^2
+(2(m-1)-q)\sum_{i>1}T_{1i}^2
+\sum_{i,j>1}T_{ij}^2
\bigg].
\end{split}
\end{equation}

By the second identity in \eqref{weightedtrace}, we have
\[
\mathfrak D qPT_{11}
=
qPvz^{-(m-q)}Dv\cdot DP.
\]
After division by $v$, the two first-order terms in
\eqref{PbeforeDrift} are precisely absorbed into the drift term in
the definition of $\LL$. We arrive at
\[
\LL P
=
\frac{\mathfrak D}{v}z^{\tau-1}
\bigg[
(m-1)\tau T_{11}^2
+(2(m-1)-q)\sum_{i>1}T_{1i}^2
+\sum_{i,j>1}T_{ij}^2
\bigg],
\]
which proves \eqref{Pidentity}.

Next, \eqref{weightedtrace} implies
\[
\sum_{i>1}T_{ii}=-(m-1)T_{11}.
\]
Consequently,
\[
\sum_{i,j>1}T_{ij}^2
\geqslant
\frac{1}{n-1}
\left(\sum_{i>1}T_{ii}\right)^2
=
\frac{(m-1)^2}{n-1}T_{11}^2.
\]
Together with $\mathfrak D=(m-1)+(n-1)\tau$, this proves \eqref{Qpositive}.

Finally, the identity
\[
vDP=\mathfrak D z^{\tau-1}T Dv
\]
shows that
\[
\nu\cdot DP=\frac{\mathfrak D}{v}z^\tau T_{11},
\qquad
DP-(\nu\cdot DP)\nu
=
\frac{\mathfrak D}{v}z^\tau
\sum_{i>1}T_{1i}e_i.
\]
Substitution into \eqref{Qpositive} proves
\eqref{Pgradient}.
\end{proof}

For a solution of \eqref{epsilon}, define
\begin{equation}\label{pseudoeps}
\ell_\varepsilon[w]
=
w^{-\frac{(m-q)(n-1)}{n-m}}|Dw|^{m-q}
+\frac{b}{B}\varepsilon w^{1/b}.
\end{equation}
For the original equation, write $\ell[u]=\ell_1[u]$. The change of
variables above shows that
\begin{equation}\label{pseudo}
\ell[u]=\frac{b^{m-q}}{B}P.
\end{equation}
This form will be used in the boundary maximum principle and in the
first-contact rescaling.

\subsection{Growth at critical points and a maximum principle across the critical set}
\label{sec:critical}

Proposition \ref{scalaridentity} shows that $P$ is
$\LL$-subharmonic on $\calR(v)$. Since $\LL$ is locally uniformly
elliptic there, the usual strong maximum principle applies away from
the critical set. We now treat maxima attained at points where
$Dv=0$.

\begin{lemma}\label{growth}
Suppose that $g\in C^1(\overline{B_L(y)})$ is smooth on $\calR(g)$ and
\[
 \Lapm g\geqslant\lambda|Dg|^q
 \quad\text{on }\calR(g),
 \qquad
 \lambda>0,
 \qquad
 y\in\calS(g).
\]
Then
\begin{equation}\label{growthbound}
 \sup_{B_L(y)}g
 \geqslant g(y)+K(\lambda)L^\gamma,
 \qquad
 K(\lambda)=
 \left(\frac{\lambda}{B\gamma^{m-q}}\right)^{1/\tau}.
\end{equation}
\end{lemma}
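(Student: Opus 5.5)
The plan is a comparison argument with an explicit radial barrier, centered at regular points that approach $y$.

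\textbf{The barrier.} For $K'>0$ and a center $x_0$, set
\[
\phi(x)=K'|x-x_0|^\gamma .
\]
With $r=|x-x_0|$ and $\phi'=K'\gamma r^{\gamma-1}$, the radial formula gives
\[
\Lapm\phi=(K'\gamma)^{m-1}\bigl((\gamma-1)(m-1)+n-1\bigr)r^{(\gamma-1)(m-1)-1}.
\]
Since $\gamma-1=1/\tau$, the bracket equals $\mathfrak D/\tau$. The power of $r$ is $q/\tau=(\gamma-1)q$, which is exactly the power of $r$ in $|D\phi|^q=(K'\gamma)^q r^{(\gamma-1)q}$. Hence
\[
\Lapm\phi=\lambda'|D\phi|^q \quad\text{for } x\neq x_0,
\qquad
\lambda'=(K'\gamma)^{\tau}\mathfrak D/\tau .
\]
Using $\mathfrak D=(m-q)B$ and $\tau\gamma=\tau+1=m-q$, the choice $K'=K(\lambda)$ gives exactly $\lambda'=\lambda$. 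So the constant in \eqref{growthbound} is precisely the one that makes $K(\lambda)r^\gamma$ an exact radial solution of $\Lapm\phi=\lambda|D\phi|^q$. I will work with $K'<K(\lambda)$, for which $\lambda'<\lambda$ strictly.

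\textbf{Comparison.} Since $y\in\calS(g)$, choose $y_k\in\calR(g)$ with $y_k\to y$, and set $L_k=L-|y_k-y|$, so that $B_{L_k}(y_k)\subset B_L(y)$. Suppose, for contradiction, that
\[
\sup_{B_{L_k}(y_k)}g<g(y_k)+K'L_k^\gamma .
\]
Consider $\psi=g-g(y_k)-K'|x-y_k|^\gamma$ on $\overline{B_{L_k}(y_k)}$.
\begin{itemize}
\item We have $\psi(y_k)=0$, while $\psi<0$ on the boundary sphere.
\item Hence $\psi$ attains a nonnegative interior maximum at some $x^*$.
\item $x^*\ne y_k$, because $D\psi(y_k)=Dg(y_k)\neq0$, since $\phi$ is $C^1$ with $D\phi(y_k)=0$ (note $\gamma>1$).
\item At $x^*$ we get $Dg(x^*)=D\phi(x^*)\neq0$, so $x^*\in\calR(g)$ and $g$ is smooth there. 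Also $\phi$ is smooth near $x^*$, and $D^2g(x^*)\leqslant D^2\phi(x^*)$.
\end{itemize}
Now write $\Lapm$ in nondivergence form at a point with nonzero gradient:
\[
|\xi|^{m-2}\bigl(I+(m-2)\hat\xi\otimes\hat\xi\bigr):X .
\]
This is monotone in the Hessian $X$ for fixed $\xi\neq0$, because the matrix is positive definite. With the common gradient $\xi=Dg(x^*)=D\phi(x^*)$, this yields
\[
\lambda|\xi|^q\leqslant\Lapm g(x^*)\leqslant\Lapm\phi(x^*)=\lambda'|\xi|^q<\lambda|\xi|^q,
\]
which is a contradiction. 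Therefore
\[
\sup_{B_L(y)}g\geqslant\sup_{B_{L_k}(y_k)}g\geqslant g(y_k)+K'L_k^\gamma .
\]

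\textbf{Limits.} Let $k\to\infty$, using the continuity of $g$ and $L_k\to L$, and then let $K'\uparrow K(\lambda)$. This gives \eqref{growthbound}.

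\textbf{Main obstacle.} The main difficulty is that $g$ may be degenerate or nonsmooth at $y$ itself, so the equation cannot be used there. The device above avoids this. Centering the barrier at a regular point $y_k$ forces the contact point away from the center, because the gradients differ there. At the contact point the gradients agree and are nonzero, so only the smooth, nondegenerate form of $\Lapm$ is ever used. Taking $K'<K(\lambda)$ supplies the strict inequality needed at the contact point.
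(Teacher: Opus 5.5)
Your proof is correct and follows the paper's argument. It uses the same explicit barrier $K_0|x-y|^\gamma$ with $K_0<K(\lambda)$, the same touching argument at a point where the gradients coincide and are nonzero, and the same approximation of $y\in\calS(g)$ by points of $\calR(g)$. The only cosmetic difference is in how the barrier's center is excluded as a contact point. The paper first treats $Dg(y)\neq0$ and uses the linear growth of $g$ to make $\max(g-\psi)$ positive. You instead argue by contradiction directly at the regular centers $y_k$ and rule out the center via $D\psi(y_k)=Dg(y_k)\neq0$.
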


\begin{proof}
The proof follows the standard test-function touching argument from viscosity solution theory. Assume first that $Dg(y)\ne0$, and choose $0<K_0<K(\lambda)$. Define
\[
 \psi(x)=g(y)+K_0|x-y|^\gamma.
\]
Since $\gamma-1=\frac{1}{\tau}$, a direct calculation shows that
\[
 \Lapm\psi
 =
 (K_0\gamma)^{m-1}
 \bigl[n-1+(m-1)(\gamma-1)\bigr]
 |x-y|^{q/\tau}.
\]
Moreover, $n-1+(m-1)(\gamma-1)=B\gamma$. Thus the definition of $K(\lambda)$ and the inequality
$K_0<K(\lambda)$ imply
\[
 \Lapm\psi
 <
 \lambda|D\psi|^q
 \qquad (x\ne y).
\]

Since $\gamma>1$ and $Dg(y)\ne0$, there is a direction along which
$g$ grows linearly away from $y$, whereas $K_0|x-y|^\gamma=o(|x-y|)$. Hence $g(x)-\psi(x)>0$ at some point sufficiently close to $y$. Therefore, the maximum of
$g-\psi$ on $\overline{B_L(y)}$ is positive.

Suppose that this maximum is attained at an interior point $x_0$.
Since $(g-\psi)(y)=0$, we have $x_0\ne y$. At $x_0$,
\[
 Dg(x_0)=D\psi(x_0)
 =K_0\gamma|x_0-y|^{\gamma-2}(x_0-y)\ne0.
\]
Thus $x_0\in\calR(g)$, and both functions are smooth near $x_0$.
Moreover,
\[
 D^2g(x_0)\leqslant D^2\psi(x_0).
\]
Because the two gradients agree and are nonzero, the $m$-Laplace
operator has the same positive definite coefficient matrix for $g$
and $\psi$ at $x_0$. It follows that
\[
 \Lapm g(x_0)\leqslant\Lapm\psi(x_0).
\]
On the other hand,
\[
 \Lapm g(x_0)
 \geqslant\lambda|Dg(x_0)|^q
 =\lambda|D\psi(x_0)|^q
 >\Lapm\psi(x_0),
\]
which is impossible. Thus the positive maximum of $g-\psi$ is attained
on $\partial B_L(y)$. Consequently,
\[
 \sup_{B_L(y)}g
 \geqslant g(y)+K_0L^\gamma.
\]
Letting $K_0\uparrow K(\lambda)$ proves \eqref{growthbound} when
$Dg(y)\ne0$.

For a general point $y\in\calS(g)$, choose
$y_j\in\calR(g)$ such that $y_j\to y$. Applying the result above in $B_{L-|y_j-y|}(y_j)\subset B_L(y)$, we obtain $ \sup_{B_L(y)}g
 \geqslant
 g(y_j)+K(\lambda)\bigl(L-|y_j-y|\bigr)^\gamma$. Passing to the limit as $j\to\infty$ proves
\eqref{growthbound} and completes the proof.
\end{proof}

\begin{corollary}\label{noncollapse}
Let $u_j$ solve \eqref{equation} on balls whose radii tend to infinity,
with
\[
 u_j(0)=1,\qquad 0\in\calS(u_j),
\]
and assume that $u_j$ is locally uniformly bounded from above. Then
every locally convergent subsequence has a nonconstant limit.
\end{corollary}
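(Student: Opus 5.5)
The plan is to apply Lemma~\ref{growth} to $g=v_j=u_j^{-1/b}$, and to pass the resulting growth information to the limit. By Proposition~\ref{regularity}, the sequence converges in $C^1_{\mathrm{loc}}$ along a subsequence to a nonnegative weak solution $u$ of \eqref{equation}. Lemma~\ref{loggrad}, combined with the chain-of-balls remark after it, gives a locally uniform positive lower bound from $u_j(0)=1$, so $u>0$ and $u(0)=1$. Hence $v_j=u_j^{-1/b}\to v=u^{-1/b}$ in $C^1_{\mathrm{loc}}$, with $v_j$ locally uniformly bounded above and below by positive constants.

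First I verify the hypothesis of Lemma~\ref{growth} for $v_j$. By \eqref{veq}, on $\calR(v_j)$ we have
\[
\Lapm v_j=P_jz_j^q,\qquad P_j=\frac{Bz_j^{m-q}+c}{v_j}\geqslant \frac{c}{v_j}.
\]
On a fixed ball $B_R$, where $v_j\leqslant V_R$ uniformly, this gives $\Lapm v_j\geqslant (c/V_R)|Dv_j|^q$. So the lemma applies with $\lambda=c/V_R$. Since $\calR(v_j)=\calR(u_j)$, the condition $0\in\calS(u_j)$ transfers to $v_j$. Lemma~\ref{growth} on $B_L\subset B_R$ (for $j$ large, since the domains exhaust $\mathbb R^n$) then gives
\[
\sup_{B_L}v_j\geqslant v_j(0)+K(c/V_R)L^\gamma=1+K(c/V_R)L^\gamma,
\]
with a constant independent of $j$.

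Next I pass to the limit. Uniform convergence on $\overline{B_L}$ yields $\sup_{B_L}v\geqslant 1+K L^\gamma>1=v(0)$. So $v$, and therefore $u$, is not constant on $B_L$. Any fixed $L\leqslant R$ suffices, so no large-scale argument is needed.

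The main subtlety is the $j$-uniformity of $\lambda$, which depends only on the local upper bound of $v_j$, i.e.\ on the local positive lower bound of $u_j$. That bound is exactly what the logarithmic gradient estimate provides, with $\varepsilon=1$ and $M$ the local upper bound. I also have to make sure $v_j$ is $C^1$ on a closed ball and smooth on $\calR(v_j)$ so that Lemma~\ref{growth} applies; this follows from Proposition~\ref{regularity} together with $u_j>0$.
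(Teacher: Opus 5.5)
Your proposal is correct and follows essentially the same route as the paper. Lemma \ref{loggrad} gives a locally uniform lower bound on $u_j$, which makes $\lambda=c/V_R$ independent of $j$ in $\Delta_m v_j\geqslant\lambda|Dv_j|^q$, and Lemma \ref{growth} at $0\in\calS(v_j)$ then yields a uniform oscillation bound $\sup_{B_L}v_j\geqslant 1+K(\lambda)L^\gamma$ that passes to any locally uniform limit.
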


\begin{proof}
By Lemma \ref{loggrad}, the functions $u_j$ are bounded away from zero
on each fixed ball. Hence
\[
 v_j=u_j^{-1/b}
\]
is uniformly bounded above and below there, and \eqref{veq} implies
\[
 \Lapm v_j\geqslant\lambda|Dv_j|^q
\]
with $\lambda>0$ independent of $j$. Lemma \ref{growth} therefore
provides a uniform positive lower bound for the oscillation of $v_j$
on a fixed ball. Thus no local limit can be constant.
\end{proof}

\begin{lemma}\label{criticallemma}
Let $v>0$ solve the transformed equation \eqref{veq} in a neighborhood
of the origin. Assume that
\[
 v(0)=1,\qquad P\leqslant c,\qquad 0\in\calS(v).
\]
Then $P\equiv c$ in a neighborhood of the origin.
\end{lemma}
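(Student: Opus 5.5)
The plan is to show that $v$ is trapped between two copies of the model profile, and then to upgrade the forced sharp growth into the identity $P\equiv c$.

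\emph{Step 1: $0$ is a critical point and a maximum point of $P$.} If $Dv(0)\neq0$, then $P(0)=Bz(0)^{m-q}+c>c$, which contradicts $P\leqslant c$. Hence $z(0)=0$ and $P(0)=c/v(0)=c$, so $P$ attains its maximum $c$ at the origin. Since $0\in\calS(v)$, there are regular points $y_j\to0$.

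\emph{Step 2: an upper barrier from $P\leqslant c$.} The hypothesis can be rewritten as $Bz^{m-q}\leqslant c(v-1)$, so $v\geqslant1$ near $0$. Set
\[
\Phi=(v-1)^{1/\gamma}.
\]
Since $1/\gamma=\tau/(m-q)$, the hypothesis becomes the Lipschitz-type bound
\[
|D\Phi|\leqslant \frac{1}{\gamma}\left(\frac cB\right)^{1/(m-q)}=K(c)^{1/\gamma},
\]
with $K$ as in Lemma~\ref{growth}. This uses the identity $\left(\frac1\gamma\left(\frac cB\right)^{1/(m-q)}\right)^{\gamma}=\left(\frac{c}{B\gamma^{m-q}}\right)^{1/\tau}$. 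Hence
\[
v\leqslant 1+K(c)|x|^\gamma=:\psi.
\]
A direct computation shows that $\Lapm\psi=c|D\psi|^q$ and that $P[\psi]\equiv c$. Thus $\psi$ is exactly the equality profile.

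\emph{Step 3: a matching lower growth.} On $\calR(v)$ we have $\Lapm v=Pz^q\geqslant (c/v)z^q\geqslant\lambda_L z^q$ in $B_L$, where $\lambda_L=c/\sup_{B_L}v\to c$ as $L\to0$. Lemma~\ref{growth}, applied at $0\in\calS(v)$, then gives
\[
\sup_{B_L}\Phi\geqslant K(\lambda_L)^{1/\gamma}L.
\]
So $\Phi$ almost realizes its maximal slope $K(c)^{1/\gamma}$ along some segment from $0$ to a point $x_L\in\partial B_L$.

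I will remove the defect $\lambda_L<c$ by a blow-up at scale $L$. Rescale $v_L(y)=L^{-\gamma}(v(Ly)-1)$; this is natural because $\psi$ is invariant under this rescaling. Using the $C^{1,\eta}$ compactness of Proposition~\ref{regularity} and Lemma~\ref{loggrad}, a limit $\bar\Phi$ exists. It satisfies $|D\bar\Phi|\leqslant K(c)^{1/\gamma}$ and attains $\bar\Phi(\bar x)=K(c)^{1/\gamma}$ at some $|\bar x|=1$. A function with that Lipschitz bound which reaches its maximal possible value at the end of the segment $[0,\bar x]$ must be linear there with $D\bar\Phi=K(c)^{1/\gamma}\,\bar x/|\bar x|$. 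Hence $\bar P=c$ on that open segment, and $D\bar v\neq0$ there.

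\emph{Step 4: propagation.} Return to $v$ itself. The blow-up suggests the following direct statement: on a point of $\calR(v)$ near $0$, $P$ reaches the value $c$. One way to get this is to run the segment argument directly with $\sup$ replaced by a touching argument against $\psi_{K_0}$ and $K_0\uparrow K(c)$. Once $P$ attains the interior maximum $c$ at a regular point, Proposition~\ref{scalaridentity} shows that $P$ is $\LL$-subharmonic, and $\LL$ is locally uniformly elliptic on $\calR(v)$. The strong maximum principle therefore gives $P\equiv c$ on that connected component $U$ of $\calR(v)$. On $U$ this means $Bz^{m-q}=c(v-1)$, i.e.\ $|D\Phi|=K(c)^{1/\gamma}$, so $\Phi$ solves an eikonal equation with $\Phi(0)=0$. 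Combined with $\Phi\leqslant K(c)^{1/\gamma}|x|$, this should force $\Phi=K(c)^{1/\gamma}|x|$ on a punctured neighbourhood. Then $v=\psi$ there, $\calR(v)\supset B_\delta\setminus\{0\}$, and $P\equiv c$ near the origin, with the value at $0$ given by continuity.

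\emph{Main obstacle.} The hardest step is the passage from \emph{asymptotically} sharp growth, coming from $\lambda_L<c$ and from limits, to an \emph{actual} regular point where $P=c$. This requires transferring the equality from the blow-up limit back to $v$ itself, and then showing that the component $U$ fills a punctured neighbourhood rather than a cone. My first attempt will be the touching argument of Lemma~\ref{growth} with $\psi$ itself, using that $\Lapm v\leqslant c z^q$ and $\Lapm\psi=c|D\psi|^q$ make $v$ and $\psi$ ordered sub- and supersolutions. If the touching point can be placed in $\calR(v)$, the comparison of Hessians becomes an equality there and gives the needed contact with $P=c$.
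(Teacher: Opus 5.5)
Your Steps 1--3 are correct and essentially reproduce the opening of the paper's proof. These are the bound $B|Dg|^{m-q}\leqslant cg$ for $g=v-1$, the envelope $g\leqslant K|x|^\gamma$ with $K=K(c)$, the lower growth from Lemma~\ref{growth}, and the blow-up $r^{-\gamma}g(rx)$.

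The gap is Step~4. You never produce a point of $\calR(v)$ near $0$ at which $P=c$, and the tools you propose cannot produce one.
\begin{itemize}
\item Blow-up limits see $P$ only up to $o(1)$. But $P=c+o(1)$ near $0$ is already automatic from $c/v\leqslant P\leqslant c$, so no limit distinguishes $P\equiv c$ from, say, $c-P\approx|x|^\gamma$ on a punctured ball.
\item A contact point $x_0\neq0$ of $v$ with $\psi=1+K|x|^\gamma$ would indeed give $P(x_0)=c$, but nothing forces one to exist. The only known contact is the degenerate one at $0$.
\item The touching argument of Lemma~\ref{growth} requires $K_0<K(\lambda_L)<K(c)$, and it only pushes the maximum of $g-K_0|x|^\gamma$ to $\partial B_L$.
\item The ordering is the unhelpful one. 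Since $-\Lapm v+c|Dv|^q=(c-P)z^q\geqslant0$, $v$ is a \emph{super}solution lying below the solution $\psi$, so no comparison principle forces contact.
\end{itemize}
The scenario ``$P<c$ on $\calR(v)\cap B_\delta$ and $P(0)=c$'' is exactly what the lemma must rule out, and your proposal leaves it open.

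The paper closes this in two steps.
\begin{itemize}
\item \emph{Unique tangent.} Each blow-up limit $W$ is an exact solution of $\Lapm W=c|DW|^q$ with $W\leqslant\Psi=K|x|^\gamma$. It touches $\Psi$ on the unit sphere, where $D\Psi\neq0$. Linearizing there and applying the strong minimum principle shows the contact set is open and closed in $\mathbb R^n\setminus\{0\}$, so $W=\Psi$. Hence $Dg=K\gamma|x|^{\gamma-2}x+o(|x|^{\gamma-1})$, and $0$ is an \emph{isolated} critical point. This disposes of your ``cone'' worry; your segment argument would need to be upgraded in this way.
\item \emph{Hopf-type barrier (the missing idea).} On the punctured ball, $Z=c-P$ satisfies $Z\geqslant0$, $\LL Z\leqslant0$, and the quantitative bound $Z\leqslant cg/v$. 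For $\sigma$ as in \eqref{sigmarange}, one has $\LL(g^\sigma)>0$ near $0$. If $Z\not\equiv0$, the strong minimum principle and comparison on the punctured ball give $Z\geqslant\delta g^\sigma$. This contradicts $Z/g^\sigma\leqslant(c/v)g^{1-\sigma}\to0$.
\end{itemize}
It is the decay $c-P=O(g)$, invisible to any blow-up, that does the work.

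Separately, $|D\Phi|=K^{1/\gamma}$ together with $\Phi\leqslant K^{1/\gamma}|x|$ does not force $\Phi$ to be radial: $\Phi=K^{1/\gamma}x_1$ on $\{x_1>0\}$ satisfies both. You would need the Hessian identity $T=0$ from \eqref{Pidentity}, as in Proposition~\ref{normalized}.
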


\begin{proof}
\medskip
\noindent\textit{Sharp growth and a unique tangent.}
Since
\[
 \frac{c}{v}\leqslant P\leqslant c,
\]
we have $v\geqslant1$. Let
\[
 g=v-1,
 \qquad
 K=K(c)=
 \left(\frac{c}{B\gamma^{m-q}}\right)^{1/\tau}.
\]
The definition of $P$ implies
\begin{equation}\label{sharpgrad}
 B|Dg|^{m-q}\leqslant cg.
\end{equation}
Applying this estimate to $(g+\delta)^{1/\gamma}$ and then letting
$\delta\downarrow0$, we obtain
\begin{equation}\label{sharpgrowth}
 0\leqslant g(x)\leqslant K|x|^\gamma.
\end{equation}

On $B_r$, equation \eqref{veq} implies
\[
 \Lapm g
 \geqslant
 \frac{c}{1+Kr^\gamma}|Dg|^q.
\]
Lemma \ref{growth} and the maximum principle for $m$-subharmonic
functions yield
\begin{equation}\label{tangentmax}
 K\left(\frac{c}{1+Kr^\gamma}\right)r^\gamma
 \leqslant
 \max_{\partial B_r}g
 \leqslant Kr^\gamma.
\end{equation}

For
\[
 W_r(x)=r^{-\gamma}g(rx),
\]
we have
\begin{equation}\label{tangenteq}
 \Lapm W_r
 =
 \frac{Br^\gamma|DW_r|^m+c|DW_r|^q}
 {1+r^\gamma W_r}.
\end{equation}
Equations \eqref{sharpgrad} and \eqref{sharpgrowth} bound $W_r$ and
$DW_r$ on every fixed ball. The right-hand side of
\eqref{tangenteq} is therefore locally bounded, so the standard
interior estimates used in Proposition \ref{regularity} provide
$C^1_{\mathrm{loc}}$ compactness, including at the origin.

Every limit $W$ satisfies
\begin{equation}\label{tangentlimit}
 \Lapm W=c|DW|^q,
 \qquad
 0\leqslant W\leqslant K|x|^\gamma,
 \qquad
 \max_{|x|=1}W=K.
\end{equation}
Let
\[
 \Psi(x)=K|x|^\gamma.
\]
The function $\Psi$ solves the same equation as $W$ and satisfies
$D\Psi\ne0$ away from the origin.

By \eqref{tangentlimit}, $W$ and $\Psi$ touch at some point of the
unit sphere. More generally, let $x_0\ne0$ be any contact point, so that $ W(x_0)=\Psi(x_0)$. Since $W\leqslant\Psi$, the function $\Psi-W$ has a local minimum at
$x_0$. Hence
\[
 DW(x_0)=D\Psi(x_0)\ne0.
\]
By continuity, after shrinking the neighborhood of $x_0$, there is a
constant $\delta>0$ such that
\[
 \bigl|(1-t)D\Psi(x)+tDW(x)\bigr|
 \geqslant\delta
\]
for every $x$ in this neighborhood and every $t\in[0,1]$. It follows that $\Psi-W$ satisfies a uniformly elliptic linear equation
with bounded lower-order coefficients in this neighborhood. Since
\[
 \Psi-W\geqslant0,
 \qquad
 (\Psi-W)(x_0)=0,
\]
the strong minimum principle implies that
\[
 W=\Psi
\]
in a neighborhood of $x_0$. Thus the contact set is open in
$\mathbb R^n\setminus\{0\}$.

The contact set is nonempty by \eqref{tangentlimit} and closed by
continuity. Since $\mathbb R^n\setminus\{0\}$ is connected, it follows
that
\[
 W(x)=\Psi(x)=K|x|^\gamma
 \qquad\text{in }\mathbb R^n\setminus\{0\}.
\]

Thus every tangent limit is the same. Since the rescalings are precompact in $C^1$ on every fixed annulus
and every tangent limit equals $K|x|^\gamma$, the whole family
converges:
\[
 W_r\longrightarrow K|x|^\gamma
 \quad\text{in }C^1
 \left(\left\{\frac12\leqslant|x|\leqslant2\right\}\right)
 \quad\text{as }r\downarrow0.
\]
Rescaling back, we obtain
\begin{equation}\label{tangentC1}
 g(x)=K|x|^\gamma+o(|x|^\gamma),
 \qquad
 Dg(x)=K\gamma|x|^{\gamma-2}x+o(|x|^{\gamma-1}).
\end{equation}
The error terms are uniform in the angular variable. Hence, for all
sufficiently small $x\ne0$,
\[
 g(x)\geqslant\frac K2|x|^\gamma>0,
 \qquad
 |Dg(x)|\geqslant
 \frac{K\gamma}{2}|x|^{\gamma-1}>0.
\]
Thus $g>0$ and $Dg\ne0$ in a sufficiently small punctured ball.

\medskip
\noindent\textit{A punctured-ball barrier.}
Set
\[
 Z=c-P.
\]
Since $Dg\ne0$ in this punctured ball, Proposition
\ref{scalaridentity} yields
\[
 \LL Z=-\LL P\leqslant0.
\]
Moreover, by the definition of $P$, we have
\[
 Z
 =
 c-\frac{Bz^{m-q}+c}{v}
 =
 \frac{cg-B|Dg|^{m-q}}{v}.
\]
By \eqref{sharpgrad}, the numerator is nonnegative. Therefore,
\begin{equation}\label{Zbound}
 \LL Z\leqslant0,
 \qquad
 0\leqslant Z
 =
 \frac{cg-B|Dg|^{m-q}}{v}
 \leqslant\frac{cg}{v}.
\end{equation}
Choose
\begin{equation}\label{sigmarange}
 \max\left\{0,1-\frac{\tau B}{m-1}\right\}<\sigma<1.
\end{equation}
A direct calculation yields
\begin{equation}\label{barrier}
 \LL(g^\sigma)
 =
 \sigma g^{\sigma-2}|Dg|^{1-\tau}
 \bigg[
 \frac{\tau cg}{v}+
 \left(
 \frac{(m-1)(1-b)g}{v}
 +(m-1)(\sigma-1)
 \right)|Dg|^{m-q}
 \bigg].
\end{equation}
For small $g$, the coefficient of $|Dg|^{m-q}$ in the bracket is
negative. Therefore, using the upper bound
\[
 |Dg|^{m-q}\leqslant\frac{cg}{B}
\]
in that term produces a lower bound for the bracket. More precisely,
the bracket is at least
\[
 \frac{cg}{B}
 \left[
 \frac{\tau B+(m-1)(1-b)g}{1+g}
 +(m-1)(\sigma-1)
 \right],
\]
which is positive for sufficiently small $g$ by
\eqref{sigmarange}. Hence
\[
 \LL(g^\sigma)>0
\]
in a sufficiently small punctured ball.

Suppose that $Z$ is not identically zero. Since
\[
 Z\geqslant0,\qquad \LL Z\leqslant0
\]
in the connected punctured ball, the strong minimum principle implies
that $Z>0$ there. Since $g>0$ as well, we may choose
\[
 0<\delta<
 \min_{\partial B_{r_0}}\frac{Z}{g^\sigma},
\]
so that
\[
 Z\geqslant\delta g^\sigma
 \quad\text{on }\partial B_{r_0}.
\]

Both $Z$ and $g^\sigma$ extend continuously to the origin with value
zero. If $Z-\delta g^\sigma$ were negative somewhere, it would attain
a negative minimum at an interior point of
$B_{r_0}\setminus\{0\}$. This is impossible because
\[
 \LL(Z-\delta g^\sigma)
 =
 \LL Z-\delta\LL(g^\sigma)<0,
\]
whereas the value of $\LL(Z-\delta g^\sigma)$ at an interior minimum
must be nonnegative. Hence
\[
 Z\geqslant\delta g^\sigma
 \quad\text{in }B_{r_0}\setminus\{0\}.
\]

On the other hand, \eqref{Zbound} implies
\[
 \frac{Z}{g^\sigma}
 \leqslant
 \frac{c}{v}g^{1-\sigma}
 \longrightarrow0
 \qquad (x\to0),
\]
which contradicts $Z/g^\sigma\geqslant\delta$. Therefore
$Z\equiv0$, and hence $P\equiv c$ near the origin.
\end{proof}

\begin{proposition}\label{Pmaximum}
Let $u$ be a positive solution in a connected open set
$\Omega\subset\mathbb R^n$. If $P$ attains its supremum in $\Omega$,
then it is constant in $\Omega$. In particular, for every bounded
domain $G\Subset\Omega$,
\begin{equation}\label{boundaryMP}
 \max_{\overline G}\ell[u]
 =
 \max_{\partial G}\ell[u].
\end{equation}
For $\varepsilon>0$, the same maximum principles hold for
$\ell_\varepsilon$ along solutions of \eqref{epsilon}.
\end{proposition}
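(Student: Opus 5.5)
The plan is an open--closed argument on the set where $P$ attains its supremum. Let $M=\sup_\Omega P$, assume it is attained, and set
\[
E=\{x\in\Omega:\ P(x)=M\}.
\]
$E$ is nonempty by assumption and relatively closed because $P$ is continuous. Continuity holds since $u$ is $C^1$ and positive, so $v=u^{-1/b}$ is $C^1$ and positive, and hence $P=(Bz^{m-q}+c)/v$ is continuous. As $\Omega$ is connected, it is enough to show that $E$ is open. Take $x_0\in E$; there are three cases.

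\emph{Case 1: $x_0\in\calR(v)$.} Near $x_0$ the function $P$ is smooth, and by Proposition \ref{scalaridentity} it satisfies $\LL P\ge0$ with $\LL$ locally uniformly elliptic with bounded coefficients. The classical strong maximum principle then gives $P\equiv M$ in a neighborhood of $x_0$.

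\emph{Case 2: $x_0\in\calS(v)$ with $Dv(x_0)=0$.} Here $P(x_0)=c/v(x_0)$, and this equals $M$. Normalize using the scaling \eqref{naturalscale}: the function $v_\lambda(x)=\lambda^{-1}v(\lambda x)$ corresponds to $u_\lambda$, and $P$ rescales as $P_\lambda(x)=\lambda^{\tau}P(\lambda x)$. I would check that $\lambda^{\tau}=v(x_0)$ and recenter at $x_0$. The rescaled solution then has $v(0)=1$, $\sup P=c$ and $0\in\calS(v)$. If one prefers not to track the scaling, the proof of Lemma \ref{criticallemma} can be redone directly with $c$ replaced by $Mv(x_0)$. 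Lemma \ref{criticallemma} gives $P\equiv c$ near $0$, which means $P\equiv M$ near $x_0$. The main obstacle is confirming that the scaling of $P$ matches exactly. From $v\Lapm v=Bz^m+cz^q$, the $c$-term forces the homogeneity $v_\lambda=\lambda^{-\gamma}v(\lambda\,\cdot)$, which is consistent with $u_\lambda=\lambda^bu(\lambda\,\cdot)$ because $b\gamma=\kappa$. Under this scaling $c/v$ rescales by the factor $\lambda^{\gamma}$, so I would choose $\lambda^{\gamma}=v(x_0)$ so that the rescaled value is $c$.

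\emph{Case 3: $x_0\notin\calS(v)$.} Then $Dv\equiv0$ on a neighborhood of $x_0$, so $v$ is constant there. This is impossible, because \eqref{veq} would give $0=\Lapm v=Pz^q$ while $P\ge c/v>0$. More precisely, the weak equation $v\Lapm v=Bz^m+cz^q$ with $z\equiv0$ is consistent, so locally $v$ is constant and $P=c/v$ is locally constant, equal to $M$. Either way, $E$ contains a neighborhood of $x_0$.

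Therefore $E$ is open, and so $P\equiv M$ on $\Omega$. For \eqref{boundaryMP}, apply this on $G$: by \eqref{pseudo}, $\ell[u]$ is a positive multiple of $P$, and its maximum over $\overline G$ is attained. If it is attained only at interior points, then $P$ is constant on the component of $G$ containing such a point. Continuity up to the boundary of that component then shows the same value is attained on $\partial G$, which is a contradiction.

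For $\varepsilon>0$, the function $w_\varepsilon=\varepsilon^{1/(m-q-p+\ldots)}w$ solves \eqref{equation}. I would verify that $\ell_\varepsilon[w]$ is a constant multiple of $\ell[\tilde w]$ for the appropriately scaled $\tilde w=\varepsilon^{\theta}w$, with $\theta$ fixed by matching the two terms in \eqref{pseudoeps}. The claim for $\ell_\varepsilon$ then follows from the case $\varepsilon=1$.
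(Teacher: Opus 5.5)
\textbf{Error in the Case 2 normalization.} Your overall structure matches the paper's: an open--closed argument on $\{P=M\}$, split into noncritical points, the interior of the critical set, and points of $\calS(v)$ with $Dv=0$, with the last case reduced to Lemma~\ref{criticallemma}. The step that fails as written is the normalization in Case 2, which is the heart of the argument.

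Your first formula $v_\lambda(x)=\lambda^{-1}v(\lambda x)$ is the correct one. Under it, however, $Dv_\lambda(x)=Dv(\lambda x)$, so $z$ is unchanged and $P_\lambda(x)=\lambda P(\lambda x)$, not $\lambda^\tau P(\lambda x)$.

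Your later correction $v_\lambda=\lambda^{-\gamma}v(\lambda\,\cdot)$ is not a symmetry of \eqref{veq}. For $v_\lambda=\lambda^{-\alpha}v(\lambda\,\cdot)$, both $v_\lambda\Lapm v_\lambda$ and $Bz_\lambda^m$ pick up the factor $\lambda^{m(1-\alpha)}$, while $cz_\lambda^q$ picks up $\lambda^{q(1-\alpha)}$. So the $c$-term forces $\alpha=1$. With $\alpha=\gamma$, the rescaled function solves \eqref{veq} with $c$ replaced by $c\lambda^{-\gamma}$. In terms of $u$ this is $u\mapsto\lambda^{\kappa}u(\lambda\,\cdot)$, the blow-down scaling that produces \eqref{epsilon} with $\varepsilon=\lambda^{-\gamma}$. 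Lemma~\ref{criticallemma} is stated for the fixed constant $c$, so it does not apply to your rescaled function. Your fallback of ``replacing $c$ by $Mv(x_0)$'' changes nothing, since $Mv(x_0)=c$.

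The fix is to take $\lambda=v(x_0)$ in the natural scaling and recenter:
$\widehat v(y)=v(x_0)^{-1}v(x_0+v(x_0)y)$. This solves \eqref{veq}, with
$\widehat P(y)=v(x_0)P(x_0+v(x_0)y)\leqslant v(x_0)M=c$, $\widehat P(0)=c$, $\widehat v(0)=1$ and $0\in\calS(\widehat v)$. This is exactly the paper's normalization, and Lemma~\ref{criticallemma} then gives $P\equiv M$ near $x_0$.

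\textbf{Two smaller points.} In Case 3, your first sentence is false. Since $q>0$, $Pz^q=0$ wherever $z=0$, so a locally constant $v$ is consistent with \eqref{veq}. Only your corrected conclusion, that $P=c/v$ is locally constant, is right, and that is what the paper uses.

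For $\varepsilon>0$, your amplitude scaling does work once the exponent is fixed. The function $\widetilde w=\varepsilon^{\theta}w$ with $\theta=1/(p-\tau)=b/(m-q)$ solves \eqref{equation}, and since $1-1/\gamma=1/(m-q)$ one gets $\ell[\widetilde w]=\varepsilon^{-1/\gamma}\ell_\varepsilon[w]$. The paper instead uses the spatial dilation $\widehat w(y)=w(\varepsilon^{-1/(m-q)}y)$, with $\ell[\widehat w](y)=\varepsilon^{-1}\ell_\varepsilon[w](\varepsilon^{-1/(m-q)}y)$. The two are equivalent via the natural scaling.
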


\begin{proof}
Suppose that $M=\sup P$ is attained, and consider the closed set
\[
 \{P=M\}.
\]
At a noncritical point, \eqref{Pidentity} and the usual strong maximum
principle show that this set is locally open. At a point in the
interior of the critical set, $v$ is locally constant, and so is $P$.

At any remaining point $x$ in the maximum set,
\[
 x\in\calS(v),
 \qquad
 v_0=v(x)=\frac{c}{M}.
\]
Define
\[
 \widehat v(y)=v_0^{-1}v(x+v_0y),
 \qquad
 \widehat P(y)=v_0P(x+v_0y).
\]
The transformed equation \eqref{veq} is unchanged by this scaling, and
\[
 \widehat v(0)=1,
 \qquad
 \widehat P\leqslant c,
 \qquad
 \widehat P(0)=c.
\]
Lemma \ref{criticallemma} shows that $\widehat P\equiv c$ near the
origin. Hence the maximum set is open. Since it is also closed, the
connectedness of the domain implies that $P$ is constant.

Because $\ell[u]$ is a positive constant multiple of $P$, this proves
\eqref{boundaryMP}. Finally, if $w$ solves \eqref{epsilon} with
$\varepsilon>0$, then
\[
 \widehat w(y)=w\bigl(\varepsilon^{-1/(m-q)}y\bigr)
\]
solves the coefficient-one equation and
\[
 \ell[\widehat w](y)
 =
 \varepsilon^{-1}
 \ell_\varepsilon[w]
 \bigl(\varepsilon^{-1/(m-q)}y\bigr).
\]
The result for $\ell_\varepsilon$ follows from the coefficient-one
case.
\end{proof}

\subsection{The global sharp bound}

\begin{lemma}\label{globalP}
Let $u$ be an entire solution of \eqref{equation} such that
\[
 0<u\leqslant1.
\]
Then $P\leqslant c$ in $\mathbb R^n$.
\end{lemma}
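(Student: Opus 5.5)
We want $P\le c$ for every entire solution with $0<u\le 1$, i.e.\ $v=u^{-1/b}\ge 1$. By \eqref{pseudo} this is the same as the sharp bound
\[
\ell[u]\le \frac{b^{m-q}}{B}\,c=\frac{b^{m-q}}{B}b^{-\tau}=\frac{b}{B},
\]
which is exactly the value of $\ell_1$ when $u$ is constant $1$ and $Du=0$. Our plan is to combine the maximum principle, Proposition~\ref{Pmaximum}, with the scaling \eqref{naturalscale} and the logarithmic gradient estimate, Lemma~\ref{loggrad}. The idea is to show that $\limsup\ell[u]$ at infinity is controlled by $b/B$ times $\sup u^{1/b}\le 1$.

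\textbf{Step 1: a uniform bound on $\ell[u]$ on large spheres.} Apply Lemma~\ref{loggrad} with $\varepsilon=1$ in $B_{2L}(x)$, where $M\le 1$. This gives $|D\log u|\le C(L^{-1}+1)$ everywhere, so $\ell[u]$ is globally bounded. That alone is not sharp. To sharpen it we blow down: set
\[
w_R(y)=u(Ry),
\]
which solves \eqref{epsilon} with $\varepsilon=R^{m-q}$. This is not the direction we want, since $\varepsilon$ should go to zero. So instead use $u_\lambda(x)=\lambda^b u(\lambda x)$ with $\lambda\to0$; for these $\sup u_\lambda\le\lambda^b\to0$. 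The functions $\ell$ are scale invariant in the sense that $\ell[u_\lambda](x)=\lambda^{m-q}\cdot(\text{appropriate power})\,\ell[u](\lambda x)$. The cleaner route is to work directly with $P$ and the quantity
\[
Z_\delta=P-c(1+\delta).
\]
Since $vP=Bz^{m-q}+c$ and $v\ge1$, on the region where $z$ is small we already have $P\le c$. So the bound can fail only where $Bz^{m-q}>c(v-1)$.

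\textbf{Step 2: reduce to a sup at infinity.} Let $S=\sup_{\mathbb R^n}P$, which is finite by Step 1. If $S$ is attained, Proposition~\ref{Pmaximum} makes $P$ constant. Then $vP=Bz^{m-q}+c$ gives $Bz^{m-q}=Sv-c$. If $S>c$, this forces $z\ge c_0>0$ wherever $v$ is bounded below, so $v$ grows linearly. Moreover $z^{m-q}\approx (S/B)v$ forces $v$ to grow like $|x|^\gamma$ along gradient lines. Feeding this into \eqref{veq} gives a contradiction with $P$ constant; alternatively we rule it out via the Step 3 limit argument. If $S$ is not attained, take $x_j\to\infty$ with $P(x_j)\to S$.

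We rescale by \eqref{naturalscale} so that the new point is normalized. Put
\[
\widehat v_j(y)=v_j^{-1}v(x_j+v_jy),\qquad v_j=v(x_j),
\]
as in the proof of Proposition~\ref{Pmaximum}. Then \eqref{veq} is preserved, $\widehat v_j(0)=1$, and $\widehat P_j\le S v_j/v_j$ after the correct bookkeeping. More precisely $\widehat P_j(y)=v_jP(\cdot)\le v_jS$, so we renormalize by dividing. The entire rescaled solutions $\widehat u_j$ are locally bounded because $v\ge1$ gives $\widehat v_j\ge v_j^{-1}$. Here we need $v_j$ bounded, or else we handle the case $v_j\to\infty$ separately.

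\textbf{Step 3: compactness and contradiction.} Using Lemma~\ref{loggrad} and Proposition~\ref{regularity}, we pass to a $C^1_{\rm loc}$ limit. In the limit, $P_\infty$ attains its supremum $S$ at the origin, so by Proposition~\ref{Pmaximum} it is constant, equal to $S$. A constant-$P$ entire solution with $S>c$ would again satisfy $Bz^{m-q}=Sv-c$. Combined with $v\ge 1$, i.e.\ $u\le1$ preserved in the limit, this produces a point where $z=0$ and $v=c/S<1$. That contradicts $v\ge1$, or forces $v\le c/S$ somewhere; this needs checking against the constraint $\min v\ge 1$. Indeed, if $P\equiv S$ and $v$ attains values near its infimum $\ge1$, then at near-minimizing points $z\to0$. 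This gives $S\,\inf v=c$, so $S\le c$.

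The \textbf{main obstacle} is the case $v(x_j)\to\infty$, equivalently $u(x_j)\to0, where the normalization loses the constraint $u\le1$. There we plan to use the $\ell_\varepsilon$ form with the blow-down $\varepsilon\to0$. The limit solves $-\Delta_m w=0$ with $\ell_0[w]=w^{-(m-q)(n-1)/(n-m)}|Dw|^{m-q}$ attaining its sup. We would then show that such an $m$-harmonic entire positive function is constant, by Liouville, so its $\ell_0$ vanishes, which contradicts $S>c>0$.
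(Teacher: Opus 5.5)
Your Step~1 is false, and this is where the argument breaks. With $M\leqslant1$, Lemma~\ref{loggrad} gives only $|Dv|\leqslant Cv$, hence $P\leqslant C(1+v^{\tau})$. Equivalently, $\ell[u]\leqslant Cu^{-(m-q)(m-1)/(n-m)}+b/B$, which is unbounded where $u\to0$. So $S=\sup P$ is not known to be finite, and the regime $v(x_j)\to\infty$ that you defer is not a side case but the whole difficulty. Indeed, the bubble $U$ tends to $0$ at infinity while $P\equiv c$ along it, so the tail sits exactly at the threshold.

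The part of your argument where $v(x_j)$ stays bounded (granting $S<\infty$) is essentially fine, provided you use pure translations. The rescaling $\widehat v_j(y)=v_j^{-1}v(x_j+v_jy)$ destroys both $v\geqslant1$ and the bookkeeping of the supremum. Translates still satisfy $u\leqslant1$, and their limit is entire with $P\equiv S$ by Proposition~\ref{Pmaximum}. If $S>c$, this gives $B|Dv|^{m-q}=Sv-c\geqslant S-c>0$ everywhere, which is impossible for a $C^1$ function bounded below: follow $-Dv/|Dv|$ at unit speed and $v$ decreases without bound. Your remark about near-minimizing points needs exactly this kind of justification.

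Your plan for $v(x_j)\to\infty$ does not work. The $\ell$-preserving rescaling is $w_j(y)=u(x_j)^{-1}u(x_j+R_jy)$ with $R_j=u(x_j)^{-\tau/(b(m-q))}$, which gives $\varepsilon_j=u(x_j)^{1/b}\to0$. But the only remaining upper bound is $w_j\leqslant u(x_j)^{-1}\to\infty$. Even when $S<\infty$, the bound $\ell\leqslant\frac{b^{m-q}}{B}S$ only makes $w_j^{-1/\kappa}$ uniformly Lipschitz, which controls $w_j$ on a small fixed ball, not on $\mathbb R^n$. More seriously, the limit need not be entire. Applied to the tail of $U$, this blow-down converges to $A|y-y_0|^{-\kappa}$, with a pole at finite distance; it is $m$-harmonic off $y_0$ and has $\ell_0$ identically equal to the positive constant $b/B$. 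Hence no Liouville theorem forces $\ell_0\equiv0$, and a qualitative blow-down cannot distinguish $S>c$ from $S=c$.

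The missing idea is a weight in $v$ that decays faster than $v^{-\tau}$. The paper works with $P\phi(v)$, where $\phi$ is as in \eqref{multiplier} and $\phi(t)\sim C_Nt^{-\Theta}$ with $\Theta=(n-1)\tau/(m-1)=\tau+\kappa\tau$. Then $P\phi(v)=O(v^{-\kappa\tau})$, so every maximizing sequence has bounded $v$, and everything reduces to translation compactness. Since $P\phi(v)$ is not covered by Proposition~\ref{Pmaximum}, one argues directly. An interior maximum above $\Lambda>c$ must be noncritical, because $P\phi(v)\leqslant P=c/v\leqslant c$ wherever $Dv=0$. At such a maximum $DP=\theta P\,Dv/v$, and \eqref{Pgradient} combined with the choice of $\theta$ in \eqref{multODE} gives $\LL(P\phi(v))>0$, a contradiction. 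Letting $N\to\infty$ (so $\phi\to1$) and then $\Lambda\downarrow c$ yields $P\leqslant c$.
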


\begin{proof}
Since $v=u^{-1/b}\geqslant1$, Lemma \ref{loggrad} applied on unit
balls shows that
\[
 |Dv|\leqslant Cv,
 \qquad
 P\leqslant C(1+v^\tau).
\]
Set
\[
 \Theta=\frac{(n-1)\tau}{m-1};
 \qquad
 \Theta-\tau=\kappa\tau>0.
\]
Fix $\Lambda>c$ and $N>0$. For $t\geqslant1$, define
\begin{equation}\label{multiplier}
 \theta(t)=
 \frac{\Theta}{
 1+Nt^{-b}
 \left(\dfrac{\Lambda-c/t}{\Lambda-c}\right)^{\tau(b+1)}
 },
 \qquad
 \phi(t)=
 \exp\left(-\int_1^t\frac{\theta(s)}{s}\,ds\right).
\end{equation}
A direct differentiation yields
\begin{equation}\label{multODE}
 \frac{m-1}{B}
 \left[
 b\theta\left(1-\frac{\theta}{\Theta}\right)-t\theta'
 \right]
 =
 \frac{\tau\theta(1-\theta/\Theta)(c/t)}
 {\Lambda-c/t}>0.
\end{equation}

Suppose that $P\phi(v)$ has an interior maximum larger than
$\Lambda$. At this point $Dv\ne0$, since
\[
 P\phi(v)\leqslant P=\frac{c}{v}\leqslant c
\]
whenever $Dv=0$. The maximum condition implies
\[
 DP=\frac{\theta(v)P}{v}Dv.
\]
Using \eqref{Pgradient} and \eqref{multODE}, we
obtain
\[
\begin{aligned}
\frac{\LL(P\phi(v))}{\phi(v)}
\geqslant&
\frac{P|Dv|^{1-\tau}}{v}
\Bigg[
 \left(
 \frac{(m-1)\theta^2}{n-1}
 -\tau\theta
 \right)P
\\
&+\frac{1}{B}
 \left(
 (n-1)\tau\theta
 -(m-1)\theta^2
 -(m-1)v\theta'
 \right)
 \left(P-\frac{c}{v}\right)
\Bigg]
\\
=&
\frac{P|Dv|^{1-\tau}}{v}
\frac{\tau\theta(1-\theta/\Theta)}{\Lambda-c/v}
\frac{c}{v}(P-\Lambda)
>0,
\end{aligned}
\]
This contradicts the maximum condition.

For fixed $N$,
\[
 \phi(t)\sim C_Nt^{-\Theta}\qquad(t\to\infty),
\]
and hence
\begin{equation}\label{Mdecay}
 P\phi(v)=O(v^{-\kappa\tau})\longrightarrow0
 \qquad(v\to\infty).
\end{equation}
Suppose that
\[
 \sup_{\mathbb R^n}P\phi(v)>\Lambda,
\]
and choose $x_j\in\mathbb R^n$ such that
\[
 P(x_j)\phi(v(x_j))
 \longrightarrow
 \sup_{\mathbb R^n}P\phi(v).
\]
By \eqref{Mdecay}, the sequence $v(x_j)$ is bounded. Hence
$u(x_j)=v(x_j)^{-b}$ is bounded away from zero.

Translate $x_j$ to the origin. Since the translated solutions remain
bounded above by one and have a uniform positive value at the origin,
Lemma \ref{loggrad} and Proposition \ref{regularity} yield, after
passing to a subsequence, a positive entire limit in
$C^1_{\mathrm{loc}}$. The corresponding functions $P\phi(v)$ converge
locally uniformly, and the limit attains the same supremum at the
origin.

This supremum is larger than $\Lambda>c$. The origin of the limiting
solution is therefore noncritical, since $P=c/v\leqslant c$ at every
critical point. The interior maximum argument above now leads to a contradiction. Thus
\[
 P\phi(v)\leqslant\Lambda.
\]
Letting first $N\to\infty$ and then $\Lambda\downarrow c$ proves
$P\leqslant c$.
\end{proof}

\subsection{Rigidity of normalized solutions}
\label{sec:normalized}

\begin{proposition}\label{normalized}
Let $u$ be a nonconstant entire solution of \eqref{equation}. If
\[
 0<u\leqslant1,
 \qquad
 u(0)=1,
\]
then
\begin{equation}\label{U}
 u(x)=U(x):=(1+K|x|^\gamma)^{-b},
 \qquad
 K=\left(\frac{c}{B\gamma^{m-q}}\right)^{1/\tau}.
\end{equation}
\end{proposition}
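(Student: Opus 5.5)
The plan is to combine the global sharp bound of Lemma \ref{globalP} with the critical-point rigidity of Lemma \ref{criticallemma}, and then integrate the resulting first-order equation. Set $v=u^{-1/b}$, so that $v\geqslant1$ and $v(0)=1$. By Lemma \ref{globalP}, $P\leqslant c$ on $\mathbb R^n$. Since $u$ attains its maximum at the origin, $Du(0)=0$, and therefore $Dv(0)=0$ and $P(0)=c/v(0)=c$. Thus $P$ attains its supremum $c$ at the origin.

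Next I would show that $P\equiv c$ on $\mathbb R^n$. The first approach is to apply Proposition \ref{Pmaximum} directly with $\Omega=\mathbb R^n$. The only delicate case in that proposition is when the maximum point lies in $\calS(v)$. In that case Lemma \ref{criticallemma} applies at the origin (the normalization $v(0)=1$ already holds) and gives $P\equiv c$ near $0$. If instead $0$ lies in the interior of the critical set, then $v\equiv1$ near $0$. The maximum set $\{P=c\}$ is then open and closed, so $P\equiv c$ everywhere.

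With $P\equiv c$, the identity $Bz^{m-q}+c=cv$ gives $B|Dg|^{m-q}=cg$ for $g=v-1\geqslant0$. This is an eikonal-type equation, and the natural substitution is $h=g^{1/\gamma}$. Since $(m-q)(1-1/\gamma)=(m-q)/(\tau+1)=1$, a direct check gives $|Dh|=\text{const}=K^{1/\gamma}$ wherever $g>0$, with $K$ as in \eqref{U}. Because $u$ is nonconstant, $g\not\equiv0$. Lemma \ref{criticallemma} (more precisely its tangent expansion \eqref{tangentC1}) shows that $0$ is an isolated zero of $g$ with $g\sim K|x|^\gamma$. So $h$ is a continuous function, smooth where positive, with $h(0)=0$ and constant gradient norm $K^{1/\gamma}$ on $\{h>0\}$.

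The remaining step is to deduce $h=K^{1/\gamma}|x|$, which gives $v=1+K|x|^\gamma$ and hence $u=U$. The zero set of $h$ is the closed set $\{v=1\}$. Along any point of it, Lemma \ref{criticallemma} applies after the scaling used in Proposition \ref{Pmaximum} (it is not needed here, since $v=1$ there). Hence each zero is isolated and $h$ grows like $K^{1/\gamma}|x-x_0|$ near it. A function with $|Dh|=a$ away from an isolated set satisfies $h(x)\leqslant a\,\dist(x,\{h=0\})$ by integration along segments. For the reverse inequality I would follow the gradient flow of $-h$: its integral curves are straight lines, by the standard eikonal argument ($D^2h\,Dh=0$ since $|Dh|$ is constant), and $h$ decreases at unit rate $a$ along them until a zero is reached. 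This gives $h=a\,\dist(\cdot,Z)$ with $Z=\{h=0\}$. Since $h$ is $C^1$ away from $Z$ and a distance function to a set with two or more points has a nondifferentiable ridge, $Z=\{0\}$.

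I expect the main obstacle to be this final rigidity step. The points to check are that the zero set is a single point, and that $g$ is differentiable enough ($v$ is $C^{1,\eta}$ and smooth on $\calR(v)$, and $\{g>0\}=\calR(v)$ by the eikonal equation) for the straight-line argument to hold up to the zeros.
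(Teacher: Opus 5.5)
\textbf{Verdict: the route is different from the paper's and can be made to work, but the final step has a genuine gap.}

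Your first step ($P\equiv c$ from Lemma \ref{globalP} and Proposition \ref{Pmaximum}) matches the paper. Your characteristics argument giving $h=K^{1/\gamma}\dist(\cdot,Z)$ with $Z=\{v=1\}$ is sound. The paper finishes differently. $P\equiv c$ forces $T=0$ by \eqref{Pidentity}, which gives $D^2(g^{2/\gamma})=2K^{2/\gamma}I$ on each component of $\calR(v)$. Hence $g=K|x-x_0|^\gamma$ there, and a boundary argument shows that component is $\mathbb R^n\setminus\{x_0\}$. Nothing about the zero set is needed. By keeping only the first-order relation $B|Dg|^{m-q}=cg$, you make the structure of $Z$ the whole difficulty, and that is where the gap lies.

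\textbf{The ridge principle is false.} For any closed convex $Z$ with more than one point (a segment, a half-space), every point off $Z$ has a unique nearest point, so $\dist(\cdot,Z)$ is $C^1$ on $\mathbb R^n\setminus Z$. The eikonal relation cannot exclude this on its own. For example, $g=K(x_n)_+^\gamma$ satisfies $B|Dg|^{m-q}=cg$, hence $P\equiv c$. It violates \eqref{veq} for $n\geqslant2$, but $P\equiv c$ alone does not detect this.

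\textbf{Isolatedness is only justified at some zeros.} Your argument therefore rests on the claim that every zero is isolated. That is justified only at zeros lying in $\calS(v)$, where the expansion \eqref{tangentC1} from the proof of Lemma \ref{criticallemma} is available; that expansion uses Lemma \ref{growth}, which needs the point in $\calS$. Zeros in the interior of $\{v=1\}$ are never excluded. This is the case you yourself raised for the origin, and it is exactly the half-space example.

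\textbf{The repair.}
\begin{enumerate}
\item Suppose $\operatorname{int}\{v=1\}\neq\varnothing$. It is a proper open subset because $u$ is nonconstant, so it has a boundary point $y$.
\item Then $y\in\calS(v)$; otherwise $Dv\equiv0$ near $y$ and $y$ would be interior.
\item Now \eqref{tangentC1} at $y$ gives $g>0$ on a punctured ball about $y$. This contradicts $y$ being a limit of zeros.
\item Hence all zeros lie in $\calS(v)$, and $Z$ is closed and discrete.
\item If $Z\neq\{0\}$, take $z_1\in Z$ and a nearest point $z_2\in Z\setminus\{z_1\}$. The midpoint of $z_1$ and $z_2$ has both as nearest points of $Z$, so $\dist(\cdot,Z)$ is not differentiable there. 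This contradicts $h\in C^1(\mathbb R^n\setminus Z)$.
\end{enumerate}

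With these two additions your route is complete. It replaces the paper's Hessian identity by the blow-up analysis at the zeros of $g$.
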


\begin{proof}
Since $u$ attains its maximum at the origin,
\[
 v(0)=1,\qquad Dv(0)=0,\qquad P(0)=c.
\]
Lemma \ref{globalP} and Proposition \ref{Pmaximum} imply
$P\equiv c$. Proposition \ref{scalaridentity} then yields $T=0$ on
$\calR(v)$.

Let $g=v-1$. On each connected component of $\calR(v)$,
\begin{equation}\label{gequalities}
 |Dg|^{m-q}=\frac{c}{B}g,
 \qquad
 D^2g=
 \frac{c|Dg|^{1-\tau}}{\mathfrak D}
 \bigl(\tau I+(1-\tau)\nu\otimes\nu\bigr).
\end{equation}
Using
\[
 1-\tau+(m-q)\left(\frac{2}{\gamma}-1\right)=0,
\]
a direct calculation yields
\begin{equation}\label{rootidentities}
 D^2\bigl(g^{2/\gamma}\bigr)=2K^{2/\gamma}I,
 \qquad
 \left|D\bigl(g^{2/\gamma}\bigr)\right|^2
 =4K^{2/\gamma}g^{2/\gamma}.
\end{equation}
Thus, on that component,
\[
 g^{2/\gamma}
 =
 K^{2/\gamma}|x-x_0|^2+d
\]
for some $x_0\in\mathbb R^n$ and constant $d$. The second identity in
\eqref{rootidentities} implies $d=0$, so
\[
 g(x)=K|x-x_0|^\gamma.
\]

Let $\Omega$ be the connected component of $\calR(v)$ under
consideration. On $\Omega$,
\[
 g(x)=K|x-x_0|^\gamma,
 \qquad
 Dg(x)=K\gamma|x-x_0|^{\gamma-2}(x-x_0).
\]
Suppose that $ y\in\partial\Omega\setminus\{x_0\}$. Choose $x_j\in\Omega$ such that $x_j\to y$. By the $C^1$ continuity
of $g$,
\[
 Dg(y)
 =
 \lim_{j\to\infty}Dg(x_j)
 =
 K\gamma|y-x_0|^{\gamma-2}(y-x_0)\ne0.
\]
Thus $y\in\calR(v)$. Since $\calR(v)$ is open, a neighborhood of $y$
is contained in $\calR(v)$ and intersects $\Omega$. The maximality of
the connected component $\Omega$ then implies that this neighborhood
is contained in $\Omega$, contradicting $y\in\partial\Omega$.
Therefore,
\[
 \partial\Omega\cap
 \bigl(\mathbb R^n\setminus\{x_0\}\bigr)=\varnothing.
\]
Hence $\Omega$ is both open and closed in
$\mathbb R^n\setminus\{x_0\}$. Since this set is connected,
\[
 \Omega=\mathbb R^n\setminus\{x_0\}.
\]
By continuity,
\[
 g(x)=K|x-x_0|^\gamma
 \qquad\text{in }\mathbb R^n.
\]
Consequently,
\[
 u(x)=(1+K|x-x_0|^\gamma)^{-b}.
\]
This function attains its maximum only at $x_0$. Since $u(0)=1$, we
have $x_0=0$, which proves \eqref{U}.
\end{proof}

\section{Proof of Theorem \ref{main}}

\subsection{The first-contact lemma}
\label{sec:contact}

\medskip
\noindent\textbf{Selection in the presence of constant regions.}

We first record a selection estimate used below. Let $B$ be a ball,
let $f>0$ be $C^1$ near $\overline B$, and suppose that
$z\in\calS(f)\cap B$ maximizes
\[
 \dist(x,\partial B)^\rho f(x)
\]
over $\calS(f)\cap B$, where $\rho>0$. Write
\[
 d_z=\dist(z,\partial B).
\]
Then
\begin{equation}\label{selection}
 f(x)\leqslant
 f(z)\left(1-\frac{|x-z|}{d_z}\right)^{-\rho},
 \qquad |x-z|<d_z.
\end{equation}
Indeed, for $x\in\calS(f)$, maximality and
\[
 \dist(x,\partial B)\geqslant d_z-|x-z|
\]
imply \eqref{selection}. If $x\notin\calS(f)$, then $x$ lies in a
connected component of $B\setminus\calS(f)$, on which $f$ is constant.
The segment from $z$ to $x$ meets the boundary of that component at a
point $y\in\calS(f)$ satisfying
\[
 f(y)=f(x),
 \qquad
 |y-z|\leqslant|x-z|.
\]
Applying the estimate at $y$ proves \eqref{selection}. The weighted
maximum is attained because the weight vanishes on $\partial B$.

For the normalized solution $U$ in \eqref{U}, define
\begin{equation}\label{tailconstants}
 A_*=K^{-b},
 \qquad
 \mathfrak M=
 |\mathbb S^{n-1}|(\kappa A_*)^{m-1}.
\end{equation}
Then
\begin{equation}\label{bubbleidentities}
 U(r)\sim A_*r^{-\kappa},
 \qquad
 \int_{\mathbb R^n}U^p|DU|^q\dd=\mathfrak M,
 \qquad
 \ell[U]\equiv\frac{b}{B}
 =
 \kappa^{m-q}A_*^{-(m-q)/\kappa}.
\end{equation}
The first and third identities follow directly from \eqref{U} and the
definitions. The second follows by integrating the radial equation
over $B_R$ and letting $R\to\infty$.

\begin{proposition}[First-contact control]\label{firstcontact}
Let $\Gamma_j\to\infty$, and let $u_j$ solve \eqref{equation} in
$B_{4\Gamma_j}$ with
\[
 0<u_j\leqslant C_0,
 \qquad
 u_j\longrightarrow U
 \quad\text{in }C^1_{\mathrm{loc}}(\mathbb R^n).
\]
For every fixed $\vartheta>1$ and every sequence
$S_j=o(\Gamma_j)$,
\begin{equation}\label{firstclaim}
 u_j<\vartheta U
 \quad\text{in }B_{S_j}
\end{equation}
for all sufficiently large $j$.
\end{proposition}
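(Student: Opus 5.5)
We argue by contradiction, with a first-contact rescaling. Suppose the claim fails for some $\vartheta>1$ and $S_j=o(\Gamma_j)$. After passing to a subsequence, define the first radius
\[
 R_j=\inf\{r>0:\ u_j(x)\geqslant\vartheta U(x)\ \text{for some }|x|=r\}\leqslant S_j .
\]
Since $u_j\to U$ in $C^1_{\mathrm{loc}}$, we have $R_j\to\infty$. We also have $u_j\leqslant\vartheta U$ on $\overline{B_{R_j}}$, with equality at some $x_j$ with $|x_j|=R_j$, so $u_j(x_j)\approx\vartheta A_*R_j^{-\kappa}$.

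We then blow down at scale $R_j$, setting
\[
 w_j(y)=R_j^{\kappa}u_j(R_jy).
\]
Since $b\gamma=\kappa$, the function $w_j$ solves \eqref{epsilon} with $\varepsilon_j=R_j^{-\gamma(m-q)}\cdot(\text{power})\to0$. The precise exponent comes from \eqref{naturalscale} with scaling exponent $\kappa$ in place of $b$, and it is a negative power of $R_j$. On $\overline{B_1}\setminus\{0\}$ we have $w_j\leqslant\vartheta A_*|y|^{-\kappa}(1+o(1))$, with equality at $y_j=x_j/R_j\in\partial B_1$.

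To obtain a limit, we first need bounds on $w_j$ in a neighborhood $B_{1+\delta}$. We get them from Lemma \ref{loggrad} applied on balls of radius $\sim R_j$ inside $B_{4\Gamma_j}$, which is possible because $S_j=o(\Gamma_j)$. There $M\leqslant C_0$, and since $\varepsilon^{1/(m-q)}M^{1/b}$ scales correctly, the oscillation of $\log w_j$ is controlled on $B_2\setminus B_{1/2}$. Proposition \ref{regularity} then gives $w_j\to W$ in $C^1_{\mathrm{loc}}(B_2\setminus\{0\})$, where $W$ is $m$-harmonic, satisfies $W\leqslant\vartheta A_*|y|^{-\kappa}$ in $B_1\setminus\{0\}$, and has $W(y_\infty)=\vartheta A_*$ at some $|y_\infty|=1$.

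Lemma \ref{pole} applies to $W$, giving $W=A|y|^{-\kappa}+\beta+h$. To identify $A$ we use Lemma \ref{massbound} and the measure convergence. The total mass $-\Delta_m w_j(B_\rho)$ equals the integral of $u_j^p|Du_j|^q$ over $B_{\rho R_j}$, which tends to $\mathfrak M$. Convergence on $B_{R}$ for fixed $R$ gives $\liminf\geqslant\mathfrak M$. For the reverse inequality, the bound $u_j\leqslant\vartheta U$ controls the mass in the annuli $R\leqslant|x|\leqslant\rho R_j$: the integrand is bounded by the tail mass through Lemma \ref{massbound} applied on dyadic annuli, using $u_j\lesssim |x|^{-\kappa}$ there. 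Hence
\[
 |\mathbb S^{n-1}|(\kappa A)^{m-1}=\mathfrak M,\qquad\text{that is,}\qquad A=A_*.
\]
Next we show $\beta\leqslant 0$. Far from the origin the $u_j$ are close to $U$, and Proposition \ref{Pmaximum} applied to $\ell_{\varepsilon_j}[w_j]$ on $B_1$, together with \eqref{bubbleidentities}, suggests that $\ell_0[W]\leqslant b/B$ in the limit. Near the pole, $\ell_0[W]\approx\kappa^{m-q}(A_*+\beta|y|^\kappa)^{-(m-q)/\kappa}$ by \eqref{poleder}, so a positive $\beta$ would make $\ell_0[W]$ smaller, not larger. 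The direction of this comparison must therefore be checked carefully.

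The alternative, which I would attempt first, is comparison: $W$ and $A_*|y|^{-\kappa}$ are both $m$-harmonic in $B_1\setminus\{0\}$ with the same pole coefficient. However, $W(y_\infty)=\vartheta A_*>A_*$ at $|y_\infty|=1$ while $W\leqslant\vartheta A_*|y|^{-\kappa}$ inside. The contradiction should come from the boundary maximum principle \eqref{boundaryMP} for $\ell_{\varepsilon_j}[w_j]$ on $B_1$. The value of $\ell$ near the origin tends to $b/B$ because of the $U$-profile; the contact condition at $y_j$, with $Dw_j$ compared to $D(\vartheta U_{R_j})$, forces a boundary value of $\ell$ at $y_j$ that must be compared with $b/B$. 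Setting up this first-order contact inequality correctly, and showing it contradicts $\vartheta>1$ together with $A=A_*$, is the main obstacle. I expect to need the strong comparison/Hopf lemma at the contact point between $W$ and $\vartheta A_*|y|^{-\kappa}$, which is possible since both gradients are nonzero there, combined with the pole expansion: a pole with $A=A_*$ cannot lie below $\vartheta A_*|y|^{-\kappa}$ and touch it at $|y|=1$ while remaining consistent with the maximal $\ell$-bound. This yields the contradiction and proves \eqref{firstclaim}.
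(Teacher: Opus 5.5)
\textbf{There is a genuine gap in the compactness at the contact point, and a second one in the endgame.} Your skeleton is the paper's: first contact, blow-down with $\varepsilon_j=R_j^{-\gamma}$, an $m$-harmonic punctured limit, and Lemma~\ref{pole} with $A=A_*$ from the core mass. But the compactness step you rely on is false. In the blown-down variables the only bound outside $\overline{B_1}$ is $\sup w_j\leqslant C_0R_j^{\kappa}$. Since $\kappa/b=\gamma$ and $\gamma\tau=m-q$,
\[
 \varepsilon_j^{1/(m-q)}\bigl(C_0R_j^{\kappa}\bigr)^{1/b}=C_0^{1/b}R_j\longrightarrow\infty .
\]
So Lemma~\ref{loggrad} gives no control of $\log w_j$ on any neighborhood of the contact point $y_j\in\partial B_1$, because the contact inequality holds only in $\overline{B_1}$. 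A priori a bubble could form next to $y_j$, making $w_j(y_j)=\vartheta A_*$ compatible with $w_j\to A_*|y|^{-\kappa}$ on compact subsets of $B_1\setminus\{0\}$. Your claim $W(y_\infty)=\vartheta A_*$ is therefore unjustified.

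The paper excludes this by mass quantization, an idea missing from your plan. Since $w_j(y_j)$ is bounded, Lemma~\ref{massbound} gives $\mu_j(B_r(y_j))\leqslant Cr^{n-m}$ for $\mu_j=-\Delta_m w_j$. Suppose the intrinsic height $\varepsilon_j^{1/(m-q)}w_j^{1/b}$ were unbounded near $y_j$. Then a weighted-maximum selection on $\calS(w_j)$, Corollary~\ref{noncollapse} and Proposition~\ref{normalized} produce a rescaled copy of $U$ inside $B_{2r}(y_j)$. Its mass is at least $C_0^{-(m-1)/\tau}\cdot\frac12\int_{B_1}U^p|DU|^q\,dx$, which is impossible for small fixed $r$. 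Only after this does Lemma~\ref{loggrad} give $C^1$ convergence at $y_j$.

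Your tail estimate for the core mass also fails as written. Because $\kappa(m-1)=n-m$, Lemma~\ref{massbound} is scale invariant. It gives only $O(1)$ mass per dyadic annulus, which sums to $O(\log R_j)$. What works is the pointwise bound $u_j^p|Du_j|^q\leqslant C_t|y|^{-n-\gamma}$ on $1\leqslant|y|\leqslant tR_j$ with $t<1$. It follows from Lemma~\ref{loggrad} on balls of radius comparable to $|y|$ inside $B_{R_j}$, together with $\kappa p+(\kappa+1)q=n+\gamma$.

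The endgame lacks its key inequality. A Hopf-type argument at $y_\infty$ cannot give a contradiction. Once $A=A_*$, one has $W<\vartheta A_*|y|^{-\kappa}$ in $B_1\setminus\{0\}$, and touching at a point of $\partial B_1$ is perfectly consistent with that. What you must show is $W\equiv\Phi:=A_*|y|^{-\kappa}$, so that $W(y_\infty)=A_*<\vartheta A_*$. This needs two opposite bounds on $\beta$:
\begin{itemize}
\item \emph{$\beta\geqslant0$, absent from your plan.} This comes from $W\geqslant\Phi$. Compare the $m$-superharmonic $w_j$ with $(1-\delta)A_*(|x|^{-\kappa}-L_j^{-\kappa})$ on $B_{L_j}\setminus\overline{B_s}$ with $L_j\to\infty$, using the pole expansion on $\partial B_s$.
\item \emph{$\beta\leqslant0$, from Proposition~\ref{Pmaximum}.} The direction you doubted is the right one. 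Since $\ell[u_j](R_jx)=\ell_{\varepsilon_j}[w_j](x)$, \eqref{boundaryMP} on $B_{rR_j}$ gives $\max_{|x|=r}\ell_{\varepsilon_j}[w_j]\geqslant\ell[u_j](0)\to b/B$. This is a lower bound on small spheres. If $\beta>0$, the expansion together with \eqref{poleder} would push the limit strictly below $b/B$ uniformly on $\partial B_r$.
\end{itemize}
With $\beta=0$ and $W-\Phi\geqslant0$, you get $W=\Phi$. Use the strong minimum principle at a zero of $W-\Phi$, or comparison with $(1-\delta)\Phi+d_0/2$ if there is no zero. Only then does the convergence at $y_j$ from the first paragraph yield $w_j(y_j)\to A_*$, the desired contradiction.
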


\begin{proof}
Suppose that \eqref{firstclaim} fails. By local convergence and
continuity, there are first-contact radii
\[
 \rho_j\to\infty,
 \qquad
 \rho_j=o(\Gamma_j),
\]
and points $e_j\in\mathbb S^{n-1}$ such that
\begin{equation}\label{contact}
 u_j\leqslant\vartheta U
 \quad\text{in }B_{\rho_j},
 \qquad
 u_j(\rho_je_j)=\vartheta U(\rho_j).
\end{equation}
Define
\begin{equation}\label{blowdown}
 W_j(x)=\rho_j^\kappa u_j(\rho_jx),
 \qquad
 \varepsilon_j=\rho_j^{-\gamma},
 \qquad
 \mu_j=\varepsilon_jW_j^p|DW_j|^q\dd.
\end{equation}
The functions $W_j$ are defined on balls whose radii tend to infinity
and satisfy
\begin{equation}\label{blowdowneq}
 -\Lapm W_j=\mu_j,
 \qquad
 W_j\leqslant C_0\varepsilon_j^{-b},
 \qquad
 W_j(e_j)\longrightarrow\vartheta A_*.
\end{equation}
The critical identity
\[
 \kappa p+(\kappa+1)q=n+\gamma
\]
also implies
\[
 \mu_j(E)
 =
 \int_{\rho_jE}u_j^p|Du_j|^q\dd
\]
for every measurable set $E$.

\medskip
\noindent\textit{Core mass and the punctured limit.}
On every compact subset of $B_1\setminus\{0\}$, \eqref{contact}
bounds $W_j$ uniformly. Since $\varepsilon_j\to0$, Proposition
\ref{regularity} yields, after passing to a subsequence,
\[
 W_j\longrightarrow W
 \quad\text{in }C^1_{\mathrm{loc}}(B_1\setminus\{0\}),
\]
where $W\geqslant0$ is $m$-harmonic.

Fix $t<1$. Applying Lemma \ref{loggrad} on balls whose radii are
comparable to $|y|$, and using \eqref{contact}, we obtain
\begin{equation}\label{tailbound}
 \begin{split}
 u_j(y)&\leqslant C_t|y|^{-\kappa},\\
 |Du_j(y)|&\leqslant C_t|y|^{-\kappa-1},\\
 u_j^p|Du_j|^q&\leqslant C_t|y|^{-n-\gamma}
 \end{split}
 \qquad
 (1\leqslant|y|\leqslant t\rho_j).
\end{equation}
Since
\[
 \mu_j(B_t)
 =
 \int_{B_{t\rho_j}}u_j^p|Du_j|^q\dd,
\]
the local convergence $u_j\to U$ and the integrable tail
\eqref{tailbound} imply
\begin{equation}\label{coremass}
 \mu_j(B_t)\longrightarrow\mathfrak M,
 \qquad
 \mu_j\rightharpoonup\mathfrak M\delta_0
 \quad\text{locally in }B_1.
\end{equation}
Radial testing of the weak equation, followed by
$C^1$ convergence on $\partial B_t$, yields
\[
 -\int_{\partial B_t}
 \Am(DW)\cdot\nu_{\partial B_t}\,\ds
 =
 \mathfrak M>0.
\]
The identity first holds for almost every $t$ and then for every
$t\in(0,1)$ by continuity. Hence $W\not\equiv0$, and the strong
minimum principle implies $W>0$.

Lemma \ref{pole}, together with \eqref{tailconstants} and
\eqref{coremass}, now shows that
\begin{equation}\label{Wpole}
 W(x)=\Phi(x)+\beta+h(x),
 \qquad
 \Phi(x)=A_*|x|^{-\kappa},
\end{equation}
where
\[
 \beta\in\mathbb R,
 \qquad
 h(x)=o(1),
 \qquad
 r\sup_{|x|=r}|Dh(x)|=o(1).
\]

\medskip
\noindent\textit{Comparison with the fundamental solution.}
We claim that
\[
 W\geqslant\Phi
 \quad\text{in }B_1\setminus\{0\}.
\]
Fix $\delta>0$. By \eqref{Wpole}, there is a small radius $s>0$
such that
\[
 W\geqslant(1-\delta/2)\Phi
 \quad\text{on }\partial B_s.
\]
For large $j$,
\[
 W_j\geqslant(1-\delta)\Phi
 \quad\text{on }\partial B_s.
\]
Choose $L_j\to\infty$ so that $B_{L_j}$ remains inside the domain of
$W_j$. On $B_{L_j}\setminus\overline{B_s}$, compare $W_j$ with
\[
 (1-\delta)A_*
 \bigl(|x|^{-\kappa}-L_j^{-\kappa}\bigr).
\]
This function is $m$-harmonic, vanishes on $\partial B_{L_j}$, and is
no larger than $W_j$ on $\partial B_s$. The weak comparison principle
therefore applies. Letting $j\to\infty$ and then
$\delta\downarrow0$, and finally allowing $s\downarrow0$, proves the
claim. In particular,
\[
 \beta\geqslant0.
\]

\medskip
\noindent\textit{Eliminating the finite part.}
The rescaling satisfies the exact identity
\begin{equation}\label{ellscale}
 \ell[u_j](\rho_jx)
 =
 \ell_{\varepsilon_j}[W_j](x).
\end{equation}
Indeed,
\[
 \frac{\kappa(n-1)}{n-m}=\kappa+1,
 \qquad
 \frac{\kappa}{b}=\gamma.
\]

Assume that $\beta>0$. From \eqref{Wpole}, uniformly on
$\partial B_r$,
\begin{equation}\label{deficit}
 \begin{split}
 &W^{-\frac{(m-q)(n-1)}{n-m}}|DW|^{m-q}\\
 &\qquad=
 \frac{b}{B}
 \left[
 1-
 \frac{(m-q)(\kappa+1)\beta}{\kappa A_*}r^\kappa
 +o(r^\kappa)
 \right].
 \end{split}
\end{equation}
The derivative estimate in \eqref{Wpole} implies that the relative
error in $|DW|$ is $o(r^\kappa)$, which is the order needed in
\eqref{deficit}.

Fix $r>0$ so small that the right-hand side of \eqref{deficit} is
strictly less than $b/B$ on $\partial B_r$. Proposition
\ref{Pmaximum}, applied to $B_{r\rho_j}$, implies
\[
 \ell
 \leqslant
 \max_{|y|=r\rho_j}\ell[u_j](y)
 =
 \max_{|x|=r}\ell_{\varepsilon_j}[W_j](x).
\]
For this fixed $r$, let $j\to\infty$. By
\eqref{bubbleidentities}, the left-hand side tends to $b/B$, whereas
the limit of the right-hand side is strictly smaller by
\eqref{deficit}. This contradiction proves
\[
 \beta=0.
\]

We next show that $W=\Phi$. The remainder $h$ need not be
$m$-harmonic, so we compare $W$ directly with $\Phi$. If
$W-\Phi$ vanishes at one point, the same local linearization and
strong minimum principle used in Section \ref{sec:critical} show that
the contact set is open. It is also closed, and therefore
$W=\Phi$ throughout the punctured ball.

Otherwise $W>\Phi$. Fix $r_0\in(0,1)$ and let
\[
 d_0=\min_{\partial B_{r_0}}(W-\Phi)>0.
\]
For each $\delta>0$, compare $W$ with
\[
 (1-\delta)\Phi+\frac{d_0}{2}
\]
on $B_{r_0}\setminus\overline{B_s}$, where $s>0$ is sufficiently
small. The comparison holds on $\partial B_{r_0}$ by the definition
of $d_0$, and on $\partial B_s$ because
\[
 W\geqslant\Phi,
 \qquad
 \delta\Phi\longrightarrow\infty
 \quad (s\downarrow0).
\]
Letting $\delta\downarrow0$ implies
\[
 W\geqslant\Phi+\frac{d_0}{2}
\]
near the origin, contradicting $\beta=0$. Hence
\begin{equation}\label{purepole}
 W=\Phi
 \quad\text{in }B_1\setminus\{0\}.
\end{equation}

\medskip
\noindent\textit{Excluding concentration near the contact point.}
The convergence in \eqref{purepole} holds only on compact subsets of
$B_1\setminus\{0\}$ and does not yet control $W_j(e_j)$. Since
$W_j(e_j)$ is bounded, Lemma \ref{massbound} and
\eqref{blowdowneq} imply
\begin{equation}\label{contactmass}
 \mu_j(B_r(e_j))
 \leqslant Cr^{n-m}
\end{equation}
for all sufficiently small fixed $r$, uniformly in $j$.

Define the intrinsic height
\[
 \Theta_j(x)
 =
 \varepsilon_j^{1/(m-q)}W_j(x)^{1/b},
\]
and set
\[
 \mathfrak m_0
 =
 \frac12\int_{B_1}U^p|DU|^q\dd>0.
\]
Choose $r>0$ so small that
\begin{equation}\label{smallcontactmass}
 C(2r)^{n-m}
 <
 C_0^{-(m-1)/\tau}\mathfrak m_0.
\end{equation}

Suppose that
\[
 \sup_{B_r(e_j)}\Theta_j\longrightarrow\infty
\]
along a subsequence. A point with large value may be chosen in
$\calS(W_j)$: if it lies in a constant open component, the segment
joining it to $e_j$ meets the boundary of that component at a point
with the same value.

Maximize
\[
 \dist(x,\partial B_{2r}(e_j))^bW_j(x)
\]
over $\calS(W_j)\cap B_{2r}(e_j)$. Let $z_j$ be a maximum point and
write
\begin{equation}\label{secondscale}
 \begin{gathered}
 H_j=W_j(z_j),
 \qquad
 d_j=\dist(z_j,\partial B_{2r}(e_j)),\\
 \delta_j=
 \varepsilon_j^{-1/(m-q)}H_j^{-1/b},
 \qquad
 V_j(y)=H_j^{-1}W_j(z_j+\delta_jy).
 \end{gathered}
\end{equation}
The maximizing property implies
\[
 \frac{d_j}{\delta_j}
 =
 \varepsilon_j^{1/(m-q)}d_jH_j^{1/b}
 \longrightarrow\infty.
\]
By \eqref{selection},
\[
 V_j(y)
 \leqslant
 \left(
 1-\frac{\delta_j|y|}{d_j}
 \right)^{-b},
 \qquad
 |y|<\frac{d_j}{\delta_j}.
\]

Each $V_j$ solves the coefficient-one equation, satisfies
\[
 V_j(0)=1,
 \qquad
 0\in\calS(V_j),
\]
and converges locally, after passing to a subsequence, to an entire
solution $V$ with
\[
 V(0)=1,
 \qquad
 V\leqslant1.
\]
Corollary \ref{noncollapse} excludes a constant limit, and Proposition
\ref{normalized} implies
\[
 V=U.
\]
Hence, for large $j$,
\[
 \int_{B_1}V_j^p|DV_j|^q\dd\geqslant\mathfrak m_0.
\]

Returning to the $W_j$ scale,
\begin{align}
 \mu_j(B_{\delta_j}(z_j))
 &=
 H_j^{m-1}\delta_j^{n-m}
 \int_{B_1}V_j^p|DV_j|^q\dd
 \notag\\
 &=
 \left(
 \frac{\varepsilon_j^{-b}}{H_j}
 \right)^{(m-1)/\tau}
 \int_{B_1}V_j^p|DV_j|^q\dd
 \notag\\
 &\geqslant
 C_0^{-(m-1)/\tau}\mathfrak m_0.
 \label{massquantum}
\end{align}
Here we used
\[
 H_j\leqslant C_0\varepsilon_j^{-b}
\]
from \eqref{blowdowneq}. Since $\delta_j/d_j\to0$,
\[
 B_{\delta_j}(z_j)\subset B_{2r}(e_j)
\]
for large $j$. This contradicts \eqref{contactmass} and
\eqref{smallcontactmass}. Therefore $\Theta_j$ is uniformly bounded
on a fixed neighborhood of $e_j$.

\medskip
\noindent\textit{Convergence at the contact point.}
Since
\[
 \varepsilon_jW_j^{p+q-(m-1)}
 =
 \Theta_j^{m-q},
\]
Lemma \ref{loggrad} yields a uniform bound for
$|D\log W_j|$ on a smaller neighborhood of $e_j$. Together with
\[
 W_j(e_j)\longrightarrow\vartheta A_*>0,
\]
this provides uniform positive upper and lower bounds there.
Proposition \ref{regularity} therefore permits $C^1$ convergence after
translating the moving points.

Passing to a subsequence, let
\[
 e_j\longrightarrow e\in\mathbb S^{n-1}.
\]
The local limit agrees with $\Phi$ on its overlap with $B_1$ by
\eqref{purepole}. By continuity, its value at $e$ is
\[
 \Phi(e)=A_*.
\]
Consequently,
\[
 W_j(e_j)\longrightarrow A_*,
\]
which contradicts \eqref{blowdowneq}. This completes the proof.
\end{proof}

\subsection{A Harnack estimate}
\label{sec:harnack}

\begin{proposition}\label{harnack}
There exists $C=C(n,m,q)$ such that every positive solution in
$B_{3L}$ satisfies
\begin{equation}\label{harnackeq}
 \left(\sup_{B_L\cap\calR(u)}u\right)
 \left(\inf_{B_{2L}}u\right)^\tau
 \leqslant
 C L^{-\kappa\tau}.
\end{equation}
The first supremum is defined to be zero when
$B_L\cap\calR(u)=\varnothing$.
\end{proposition}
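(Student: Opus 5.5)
The plan is to argue by contradiction after normalizing the scale, and to use the classification of normalized solutions (Proposition \ref{normalized}) together with first-contact control (Proposition \ref{firstcontact}) to force decay of order $|x|^{-\kappa}$ away from a selected critical-closure point.

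\textbf{Step 1: reduce to $L=1$ and set up a contradiction sequence.} Under the scaling \eqref{naturalscale}, $u_\lambda(x)=\lambda^bu(\lambda x)$, the left-hand side of \eqref{harnackeq} is multiplied by $\lambda^{b(1+\tau)}=\lambda^{b\gamma\tau}=\lambda^{\kappa\tau}$, using $b\gamma=\kappa$ from \eqref{identities}. So \eqref{harnackeq} is scale invariant, and it suffices to treat $L=1$. Suppose then that positive solutions $u_j$ in $B_3$ and points $y_j\in B_1\cap\calR(u_j)$ satisfy
\[
 u_j(y_j)\Bigl(\inf_{B_2}u_j\Bigr)^\tau\longrightarrow\infty .
\]
We may equally use points of $\calS(u_j)$: by the segment argument preceding \eqref{selection}, any large value in a constant component is also attained on $\calS(u_j)$.

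\textbf{Step 2: select a good point and blow up.} Apply the selection estimate \eqref{selection} with weight $\dist(x,\partial B_2)^b$ to $u_j$ over $\calS(u_j)\cap B_2$. This gives $z_j$ with $H_j=u_j(z_j)\to\infty$ and the one-sided bound \eqref{selection}. Rescale by
\[
 w_j(y)=H_j^{-1}u_j(z_j+H_j^{-1/b}y).
\]
Then $w_j(0)=1$ and $0\in\calS(w_j)$. The $w_j$ are bounded by $(1-|y|/\Gamma_j)^{-b}$ on a ball of radius $\Gamma_j=d_jH_j^{1/b}\to\infty$, where $d_j=\dist(z_j,\partial B_2)$. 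In these variables the hypothesis reads
\[
 \inf_{B_{R_j}}w_j\geqslant a_jR_j^{-\kappa},\qquad a_j\to\infty,
\]
with $R_j\simeq H_j^{1/b}$ the rescaled size of $B_2$. Lemma \ref{loggrad} and Proposition \ref{regularity} give $C^1_{\mathrm{loc}}$ convergence to an entire solution $V$ with $V(0)=1$ and $V\leqslant1$. Corollary \ref{noncollapse} rules out a constant limit, so Proposition \ref{normalized} gives $V=U$.

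\textbf{Step 3: transfer bubble decay to large scales.} Now apply Proposition \ref{firstcontact} with $\vartheta=2$ and $S_j=o(\Gamma_j)$. This yields $w_j<2U\leqslant 2A_*|x|^{-\kappa}$ on $B_{S_j}$. Since $\inf_{B_{R_j}}w_j\leqslant\inf_{B_{S_j}}w_j$ whenever $S_j\leqslant R_j$, we would get $a_jR_j^{-\kappa}\leqslant 2A_*S_j^{-\kappa}$. This is a contradiction as soon as $S_j$ can be taken to be a fixed fraction of $R_j$, up to a factor $o(a_j^{1/\kappa})$.

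\textbf{Main obstacle: matching the scales.} The hard point is exactly this matching: the first-contact radius is limited by $\Gamma_j$, which comes from the distance of $z_j$ to the boundary and may be much smaller than $R_j$.
\begin{itemize}
\item First, I would replace the weight in the selection by $\dist(x,\partial B_2)^{b}u(x)\bigl(\inf_{B_2}u\bigr)^{\tau/\gamma}$, or run the selection on a slightly larger ball $B_{5/2}$. The aim is a point whose distance to the boundary is comparable to $1$, which gives $\Gamma_j\simeq R_j$.
\item If $z_j$ drifts to $\partial B_2$, I would instead cover $B_2$ by a fixed finite chain of balls and apply Steps 2--3 in each. The intermediate claim is that any point of $\calS(u)$ with large normalized height forces $\inf u\lesssim u(z)^{-1/\tau}\dist^{-\kappa}$ on a comparable ball. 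That statement can be propagated using the weak Harnack inequality \eqref{weakH} for the $m$-superharmonic $u$.
\item As a cruder fallback that avoids Proposition \ref{firstcontact}, apply Lemma \ref{growth} to $v=u^{-1/b}$ with $\lambda=c/\max v$. This only yields $\inf u\lesssim R^{-b}$ rather than $R^{-\kappa}$, so the first-contact step remains the essential ingredient.
\end{itemize}
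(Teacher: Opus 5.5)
You have the paper's overall architecture: reduction to $L=1$ via $b(1+\tau)=\kappa\tau$, selection on $\calS(u_j)$, blow-up to $U$ via Corollary~\ref{noncollapse} and Proposition~\ref{normalized}, then Proposition~\ref{firstcontact} with $\vartheta=2$. However, the step you call the main obstacle is left open, and that step is where the proof actually happens. It fails as written with your weight exponent $b$.

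Write $H_j=u_j(z_j)$, $d_j=\dist(z_j,\partial B_2)$, $m_j=\inf_{B_2}u_j$ and $M_j=\sup_{B_1\cap\calR(u_j)}u_j$. In rescaled variables the only ball about $0$ known to lie in $B_2$ is $B_{\Gamma_j}$ with $\Gamma_j=d_jH_j^{1/b}$. So your lower bound on $B_{R_j}$ with $R_j\simeq H_j^{1/b}$ is not available as stated. Evaluating $w_j<2U\leqslant 2A_*|y|^{-\kappa}$ at $|y|=S_j=o(\Gamma_j)$ gives exactly
\[
 H_jd_j^{\kappa\tau}m_j^\tau\leqslant C\left(\frac{\Gamma_j}{S_j}\right)^{\kappa\tau},
\]
so a contradiction requires $H_jd_j^{\kappa\tau}m_j^\tau\to\infty$. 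Maximality of $\dist(x,\partial B_2)^bu_j$ only gives $H_jd_j^b\geqslant M_j$. Hence $H_jd_j^{\kappa\tau}m_j^\tau\geqslant d_j^{b\tau}M_jm_j^\tau$, recalling that $\kappa\tau-b=b\tau>0$. This bound is useless if $d_j\to0$, and nothing in the contradiction hypothesis keeps the maximizer away from $\partial B_2$.

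None of your repairs closes this gap:
\begin{itemize}
\item $(\inf_{B_2}u_j)^{\tau/\gamma}$ is a constant for each $j$, so inserting it into the weight does not move the maximizer.
\item Passing to $B_{5/2}$ does not keep the maximizer away from the new boundary.
\item A covering or weak Harnack propagation acts on the infimum side, whereas the loss is in comparing $M_j$ with $H_j$.
\end{itemize}

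\textbf{The missing idea.} Select with the exponent dictated by the homogeneity of the Harnack product rather than that of $u$: maximize $\dist(x,\partial B_2)^{\kappa\tau}u_j(x)$ over $\calS(u_j)\cap B_2$ (any exponent at least $\kappa\tau$ would work). Set $A_j=u_j(x_j)$, $r_j=\frac18\dist(x_j,\partial B_2)$ and $\Gamma_j=r_jA_j^{1/b}$. Maximality then gives $A_j(8r_j)^{\kappa\tau}\geqslant M_j$. So the localized quantity $\Xi_j=A_jr_j^{\kappa\tau}m_j^\tau$ tends to infinity wherever $x_j$ lies, and $\Gamma_j^{\kappa\tau}=\Xi_j(A_j/m_j)^\tau\geqslant\Xi_j\to\infty$. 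The selection estimate \eqref{selection} with this exponent still provides the bound $w_j\leqslant(1-|y|/(8\Gamma_j))^{-\kappa\tau}$, which is what the blow-up to $U$ and Proposition~\ref{firstcontact} need.

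Now evaluate at the intermediate radius $\widehat\Gamma_j=\Xi_j^{-1/(2\kappa\tau)}\Gamma_j$. This radius tends to infinity and is $o(\Gamma_j)$. It corresponds to points at distance $r_j\Xi_j^{-1/(2\kappa\tau)}<r_j$ from $x_j$, hence inside $B_2$. This yields $m_j\leqslant CA_j\widehat\Gamma_j^{-\kappa}$, i.e.\ $\Xi_j\leqslant C\Xi_j^{1/2}$, a contradiction. No control of the distance to the boundary, no covering and no weak Harnack inequality are needed: $m_j$ is an infimum, so a single point near $x_j$ suffices.
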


\begin{proof}
The scaling \eqref{naturalscale} and the identity
\[
 b(1+\tau)=\kappa\tau
\]
reduce the proof to $L=1$. Suppose that \eqref{harnackeq} fails. Then
there are positive solutions $u_j$ in $B_3$ such that
\[
 M_jm_j^\tau\longrightarrow\infty,
 \qquad
 M_j=\sup_{B_1\cap\calR(u_j)}u_j,
 \qquad
 m_j=\inf_{B_2}u_j.
\]

Choose $x_j\in\calS(u_j)\cap B_2$ to maximize
\[
 \dist(x,\partial B_2)^{\kappa\tau}u_j(x).
\]
Set
\[
 A_j=u_j(x_j),
 \qquad
 r_j=\frac18\dist(x_j,\partial B_2),
\]
and define
\begin{equation}\label{Hselection}
 \Gamma_j=r_jA_j^{1/b},
 \qquad
 w_j(y)=A_j^{-1}u_j
 \bigl(x_j+A_j^{-1/b}y\bigr).
\end{equation}
The functions $w_j$ solve \eqref{equation} in $B_{4\Gamma_j}$,
satisfy
\[
 w_j(0)=1,
 \qquad
 0\in\calS(w_j),
\]
and are normalized at the selected points.

Let
\[
 \Xi_j=A_jr_j^{\kappa\tau}m_j^\tau.
\]
Since points of $B_1\cap\calR(u_j)$ have distance at least one from
$\partial B_2$, maximality implies
\[
 A_j(8r_j)^{\kappa\tau}\geqslant M_j.
\]
Hence $\Xi_j\to\infty$. Moreover, $A_j\geqslant m_j$ and
\begin{equation}\label{Gamma}
 \Gamma_j^{\kappa\tau}
 =
 r_j^{\kappa\tau}A_j^{m-q}
 =
 \Xi_j\left(\frac{A_j}{m_j}\right)^\tau
 \geqslant\Xi_j.
\end{equation}
In particular, $\Gamma_j\to\infty$.

Applying \eqref{selection} in $B_2$ with exponent $\kappa\tau$, we
obtain
\[
 w_j(y)
 \leqslant
 \left(
 1-\frac{|y|}{8\Gamma_j}
 \right)^{-\kappa\tau},
 \qquad
 |y|<8\Gamma_j.
\]
Thus
\[
 w_j\leqslant2^{\kappa\tau}
 \quad\text{in }B_{4\Gamma_j}.
\]
On every fixed ball the right-hand side above tends to one. Hence each
local limit $w$ satisfies
\[
 w(0)=1,
 \qquad
 0<w\leqslant1.
\]
Proposition \ref{regularity}, Corollary \ref{noncollapse}, and
Proposition \ref{normalized} imply, after passing to a subsequence,
\[
 w_j\longrightarrow U
 \quad\text{in }C^1_{\mathrm{loc}}(\mathbb R^n).
\]

Consider the intermediate radius
\[
 \widehat\Gamma_j
 =
 \Xi_j^{-1/(2\kappa\tau)}\Gamma_j.
\]
Equation \eqref{Gamma} implies
\[
 \widehat\Gamma_j\longrightarrow\infty,
 \qquad
 \widehat\Gamma_j=o(\Gamma_j).
\]
Proposition \ref{firstcontact}, applied with $\vartheta=2$, therefore
shows that
\[
 w_j\leqslant2U
 \quad\text{in }B_{2\widehat\Gamma_j}
\]
for all sufficiently large $j$.

If $|y|=\widehat\Gamma_j$, then
\[
 A_j^{-1/b}|y|
 =
 r_j\Xi_j^{-1/(2\kappa\tau)}
 <r_j,
\]
so the corresponding point
\[
 x_j+A_j^{-1/b}y
\]
lies in $B_2$. Therefore,
\[
 m_j
 \leqslant
 A_jw_j(y)
 \leqslant
 2A_jU(y)
 \leqslant
 CA_j\widehat\Gamma_j^{-\kappa}.
\]
Raising this inequality to the power $\tau$ and multiplying by
$A_jr_j^{\kappa\tau}$, we find
\[
\begin{aligned}
 \Xi_j
 \leqslant
 C A_j^{1+\tau}r_j^{\kappa\tau}
 \widehat\Gamma_j^{-\kappa\tau}=
 C A_j^{m-q}r_j^{\kappa\tau}
 \Gamma_j^{-\kappa\tau}\Xi_j^{1/2}
 =
 C\Xi_j^{1/2},
\end{aligned}
\]
which contradicts $\Xi_j\to\infty$.
\end{proof}

\subsection{Boundedness and attainment of the maximum}
\label{sec:completion}

\begin{proof}[Proof of Theorem \ref{main}]
Assume that $u$ is nonconstant.

\medskip
\noindent\textit{A lower bound at infinity.}
Since $u$ is positive and $m$-superharmonic, comparison on
$B_R\setminus\overline{B_1}$ with
\[
 \left(\min_{\partial B_1}u\right)
 \frac{|x|^{-\kappa}-R^{-\kappa}}{1-R^{-\kappa}}
\]
and passage to the limit $R\to\infty$ show that
\begin{equation}\label{lowerglobal}
 u(x)\geqslant c_1(1+|x|)^{-\kappa}
 \qquad\text{in }\mathbb R^n
\end{equation}
for some $c_1>0$.

\medskip
\noindent\textit{Global boundedness.}
For $L\geqslant1$, \eqref{lowerglobal} implies
\[
 \inf_{B_{2L}}u\geqslant cL^{-\kappa}.
\]
The Harnack estimate \eqref{harnackeq} therefore yields
\[
 \sup_{B_L\cap\calR(u)}u\leqslant C
\]
with $C$ independent of $L$. Hence $u$ is uniformly bounded on
$\calR(u)$ and, by continuity, on $\calS(u)$.

Each connected component of
$\mathbb R^n\setminus\calS(u)$ is an open region on which $u$ is
constant. Since $u$ is nonconstant, such a component is a proper
subset of $\mathbb R^n$ and has a boundary point in $\calS(u)$ with
the same value. Thus the bound extends to the whole space. We have
proved
\[
 M:=\sup_{\mathbb R^n}u<\infty,
 \qquad
 \sup_{\calR(u)}u=M.
\]

\medskip
\noindent\textit{Attainment of the maximum.}
Choose $x_j\in\calR(u)$ such that
\[
 A_j:=u(x_j)\longrightarrow M,
\]
and define
\begin{equation}\label{lastnormalize}
 w_j(y)
 =
 A_j^{-1}u\bigl(x_j+A_j^{-1/b}y\bigr).
\end{equation}
Then $w_j$ solves \eqref{equation} in $\mathbb R^n$ and satisfies
\[
 w_j(0)=1,
 \qquad
 0\in\calS(w_j),
 \qquad
 0<w_j\leqslant\frac{M}{A_j}\longrightarrow1.
\]
Proposition \ref{regularity} and Corollary \ref{noncollapse} yield a
nonconstant entire limit. Proposition \ref{normalized} identifies it
as $U$, so
\[
 w_j\longrightarrow U
 \quad\text{in }C^1_{\mathrm{loc}}(\mathbb R^n)
\]
along a subsequence.

Suppose that $u$ does not attain $M$. Then $|x_j|\to\infty$ after
passing to a subsequence. The fixed point $0$ has normalized
coordinates
\[
 y_j=-A_j^{1/b}x_j,
\]
and hence
\[
 |y_j|\longrightarrow\infty,
 \qquad
 w_j(y_j)
 =
 \frac{u(0)}{A_j}
 \longrightarrow
 \frac{u(0)}{M}>0.
\]
Since the functions $w_j$ are entire, Proposition \ref{firstcontact}
may be applied with
\[
 \Gamma_j=j(1+|y_j|),
 \qquad
 S_j=2|y_j|.
\]
Here $\Gamma_j\to\infty$ and $S_j=o(\Gamma_j)$. With
$\vartheta=2$, the proposition implies
\[
 w_j(y_j)\leqslant2U(y_j)\longrightarrow0,
\]
a contradiction. Therefore, $u$ attains its maximum at some
$x_0\in\mathbb R^n$.

\medskip
\noindent\textit{Restoring the scale.}
The function
\[
 M^{-1}u\bigl(x_0+M^{-1/b}y\bigr)
\]
satisfies the assumptions of Proposition \ref{normalized}. Hence
\[
 u(x)
 =
 M\left(
 1+KM^{\gamma/b}|x-x_0|^\gamma
 \right)^{-b}.
\]
The definitions of $K$ and $C_{n,m,q}$ imply
\[
 K=C_{n,m,q}^{-\gamma}.
\]
Taking
\[
 \lambda=\frac{M^{1/b}}{C_{n,m,q}}
\]
yields \eqref{bubblefamily}.

Positive constant functions also solve \eqref{equation}, since
$q>0$. The regularity statement follows from Proposition
\ref{regularity}. This completes the proof.
\end{proof}

\noindent\textbf{Acknowledgments:}
Tian Wu was supported by the National Natural Science Foundation
of China (Grant No. 12601391) and the Fundamental Research Funds for
the Central Universities (Grant No. WK0010250106).
Hua Zhu was supported by the National Natural Science Foundation of
China (Grant No. 12501273, 12661041) and the Research Foundation
of Southwest University of Science and Technology
(Grant No. 25zx7153). Tian Wu and Hua Zhu were also supported by
the Open Research Fund of Hubei Key Laboratory of Mathematical
Sciences (Grant No. MPL2026ORG005).

%\noindent\textbf{Research ethics:} Not applicable.

%\noindent\textbf{Informed consent:} Not applicable.

%\noindent\textbf{Author contributions:}

\noindent\textbf{Use of Large Language Models, AI and Machine Learning Tools:}
We used ChatGPT (OpenAI) as an auxiliary tool in preparing this manuscript. ChatGPT assisted us in carrying out computations and developing arguments within the framework of our ideas and proofs. The authors have carefully checked all AI-generated content and take full responsibility for the mathematical statements, proofs, references, and conclusions presented in this manuscript. We have also made every effort to improve the clarity and readability of the manuscript.

%\noindent\textbf{Conflict of interest:}
The authors state no conflict of interest.

%\noindent\textbf{Data availability:} Not applicable.

%\printbibliography[heading=bibintoc, title=\ebibname]
%\appendix
\bibliographystyle{amsplain}
\bibliography{references}

@article{BGHV2019,
  author  = {Bidaut-V{\'e}ron, M.-F. and Garc{\'i}a-Huidobro, M. and V{\'e}ron, L.},
  title   = {{Estimates of solutions of elliptic equations with a source reaction term involving the product of the function and its gradient}},
  journal = {Duke Math. J.},
  volume  = {168},
  number  = {8},
  year    = {2019},
  pages   = {1487--1537},
  doi     = {10.1215/00127094-2018-0067},
  url     = {https://doi.org/10.1215/00127094-2018-0067},
  note    = {\href{https://doi.org/10.1215/00127094-2018-0067}{doi:10.1215/00127094-2018-0067}}
}

@article{CGS1989,
  author  = {Caffarelli, L. A. and Gidas, B. and Spruck, J.},
  title   = {{Asymptotic symmetry and local behavior of semilinear elliptic equations with critical Sobolev growth}},
  journal = {Comm. Pure Appl. Math.},
  volume  = {42},
  number  = {3},
  year    = {1989},
  pages   = {271--297},
  doi     = {10.1002/cpa.3160420304},
  url     = {https://doi.org/10.1002/cpa.3160420304},
  note    = {\href{https://doi.org/10.1002/cpa.3160420304}{doi:10.1002/cpa.3160420304}}
}

@article{Catin2023,
  author    = {Catino, G. and Monticelli, D. D. and Roncoroni, A.},
  title     = {{On the critical $p$-Laplace equation}},
  journal   = {Adv. Math.},
  volume    = {433},
  year      = {2023},
  pages     = {Paper No.~109331},
  eid       = {109331},
  pagetotal = {38},
  doi       = {10.1016/j.aim.2023.109331},
  url       = {https://doi.org/10.1016/j.aim.2023.109331},
  note      = {38 pp. \href{https://doi.org/10.1016/j.aim.2023.109331}{doi:10.1016/j.aim.2023.109331}}
}

@incollection{CGY2003,
  author    = {Chang, S.-Y. A. and Gursky, M. J. and Yang, P. C.},
  title     = {{Entire solutions of a fully nonlinear equation}},
  booktitle = {Lectures on Partial Differential Equations},
  series    = {New Stud. Adv. Math.},
  volume    = {2},
  publisher = {International Press},
  address   = {Somerville, MA},
  year      = {2003},
  pages     = {43--60}
}

@article{CL1991,
  author  = {Chen, W. and Li, C.},
  title   = {{Classification of solutions of some nonlinear elliptic equations}},
  journal = {Duke Math. J.},
  volume  = {63},
  number  = {3},
  year    = {1991},
  pages   = {615--622},
  doi     = {10.1215/S0012-7094-91-06325-8},
  url     = {https://doi.org/10.1215/S0012-7094-91-06325-8},
  note    = {\href{https://doi.org/10.1215/S0012-7094-91-06325-8}{doi:10.1215/S0012-7094-91-06325-8}}
}

@article{CFR2020,
  author  = {Ciraolo, G. and Figalli, A. and Roncoroni, A.},
  title   = {{Symmetry results for critical anisotropic $p$-Laplacian equations in convex cones}},
  journal = {Geom. Funct. Anal.},
  volume  = {30},
  number  = {3},
  year    = {2020},
  pages   = {770--803},
  doi     = {10.1007/s00039-020-00535-3},
  url     = {https://doi.org/10.1007/s00039-020-00535-3},
  note    = {\href{https://doi.org/10.1007/s00039-020-00535-3}{doi:10.1007/s00039-020-00535-3}}
}

@article{Damascelli2014,
  author  = {Damascelli, L. and Merch{\'a}n, S. and Montoro, L. and Sciunzi, B.},
  title   = {{Radial symmetry and applications for a problem involving the $-\Delta_p(\cdot)$ operator and critical nonlinearity in $\mathbb R^N$}},
  journal = {Adv. Math.},
  volume  = {265},
  year    = {2014},
  pages   = {313--335},
  doi     = {10.1016/j.aim.2014.08.004},
  url     = {https://doi.org/10.1016/j.aim.2014.08.004},
  note    = {\href{https://doi.org/10.1016/j.aim.2014.08.004}{doi:10.1016/j.aim.2014.08.004}}
}

@misc{DSWZ,
  author        = {Dou, J. and Shi, B. and Wu, T. and Zhu, H.},
  title         = {{Classification of positive solutions to a class of Laplace equations with a gradient term}},
  howpublished  = {arXiv:2511.20205v1},
  year          = {2025},
  month         = {25 November},
  date          = {2025-11-25},
  eprint        = {2511.20205v1},
  archivePrefix = {arXiv},
  primaryClass  = {math.AP},
  doi           = {10.48550/arXiv.2511.20205},
  url           = {https://arxiv.org/abs/2511.20205v1},
  note          = {\href{https://doi.org/10.48550/arXiv.2511.20205}{doi:10.48550/arXiv.2511.20205}}
}

@article{Escobar1990,
  author  = {Escobar, J. F.},
  title   = {{Uniqueness theorems on conformal deformation of metrics, Sobolev inequalities, and an eigenvalue estimate}},
  journal = {Comm. Pure Appl. Math.},
  volume  = {43},
  number  = {7},
  year    = {1990},
  pages   = {857--883},
  doi     = {10.1002/cpa.3160430703},
  url     = {https://doi.org/10.1002/cpa.3160430703},
  note    = {\href{https://doi.org/10.1002/cpa.3160430703}{doi:10.1002/cpa.3160430703}}
}

@incollection{GNN1981,
  author    = {Gidas, B. and Ni, W.-M. and Nirenberg, L.},
  title     = {{Symmetry of positive solutions of nonlinear elliptic equations in $\mathbb R^n$}},
  booktitle = {Mathematical Analysis and Applications, Part A},
  series    = {Adv. Math. Suppl. Stud.},
  volume    = {7A},
  publisher = {Academic Press},
  address   = {New York},
  year      = {1981},
  pages     = {369--402}
}

@book{GT,
  author    = {Gilbarg, D. and Trudinger, N. S.},
  title     = {{Elliptic Partial Differential Equations of Second Order}},
  series    = {Classics in Mathematics},
  publisher = {Springer},
  address   = {Berlin},
  year      = {2001}
}

@book{HKM,
  author    = {Heinonen, J. and Kilpel{\"a}inen, T. and Martio, O.},
  title     = {{Nonlinear Potential Theory of Degenerate Elliptic Equations}},
  publisher = {Dover Publications},
  address   = {Mineola, NY},
  year      = {2006},
  note      = {Unabridged republication of the 1993 original}
}

@article{KV,
  author  = {Kichenassamy, S. and V{\'e}ron, L.},
  title   = {{Singular solutions of the $p$-Laplace equation}},
  journal = {Math. Ann.},
  volume  = {275},
  number  = {4},
  year    = {1986},
  pages   = {599--615},
  doi     = {10.1007/BF01459140},
  url     = {https://doi.org/10.1007/BF01459140},
  note    = {\href{https://doi.org/10.1007/BF01459140}{doi:10.1007/BF01459140}}
}

@article{KVerr,
  author  = {Kichenassamy, S. and V{\'e}ron, L.},
  title   = {{Erratum: Singular solutions of the $p$-Laplace equation}},
  journal = {Math. Ann.},
  volume  = {277},
  number  = {2},
  year    = {1987},
  pages   = {352}
}

@article{LiZhang2003,
  author  = {Li, Y. Y. and Zhang, L.},
  title   = {{Liouville-type theorems and Harnack-type inequalities for semilinear elliptic equations}},
  journal = {J. Anal. Math.},
  volume  = {90},
  year    = {2003},
  pages   = {27--87},
  doi     = {10.1007/BF02786551},
  url     = {https://doi.org/10.1007/BF02786551},
  note    = {\href{https://doi.org/10.1007/BF02786551}{doi:10.1007/BF02786551}}
}

@article{LZ1995,
  author  = {Li, Y. Y. and Zhu, M.},
  title   = {{Uniqueness theorems through the method of moving spheres}},
  journal = {Duke Math. J.},
  volume  = {80},
  number  = {2},
  year    = {1995},
  pages   = {383--417},
  doi     = {10.1215/S0012-7094-95-08016-8},
  url     = {https://doi.org/10.1215/S0012-7094-95-08016-8},
  note    = {\href{https://doi.org/10.1215/S0012-7094-95-08016-8}{doi:10.1215/S0012-7094-95-08016-8}}
}

@article{Lieberman,
  author  = {Lieberman, G. M.},
  title   = {{Boundary regularity for solutions of degenerate elliptic equations}},
  journal = {Nonlinear Anal.},
  volume  = {12},
  number  = {11},
  year    = {1988},
  pages   = {1203--1219},
  doi     = {10.1016/0362-546X(88)90053-3},
  url     = {https://doi.org/10.1016/0362-546X(88)90053-3},
  note    = {\href{https://doi.org/10.1016/0362-546X(88)90053-3}{doi:10.1016/0362-546X(88)90053-3}}
}

@article{MP2001,
  author  = {Mitidieri, E. and Pokhozhaev, S. I.},
  title   = {{A priori estimates and the absence of solutions of nonlinear partial differential equations and inequalities}},
  journal = {Proc. Steklov Inst. Math.},
  volume  = {234},
  year    = {2001},
  pages   = {1--362}
}

@article{O1971,
  author  = {Obata, M.},
  title   = {{The conjectures on conformal transformations of Riemannian manifolds}},
  journal = {J. Differential Geom.},
  volume  = {6},
  number  = {2},
  year    = {1971/72},
  pages   = {247--258},
  doi     = {10.4310/jdg/1214430407},
  url     = {https://doi.org/10.4310/jdg/1214430407},
  note    = {\href{https://doi.org/10.4310/jdg/1214430407}{doi:10.4310/jdg/1214430407}}
}

@article{Ou2025,
  author  = {Ou, Q.},
  title   = {{On the classification of entire solutions to the critical $p$-Laplace equation}},
  journal = {Math. Ann.},
  volume  = {392},
  number  = {2},
  year    = {2025},
  pages   = {1711--1729},
  doi     = {10.1007/s00208-025-03141-6},
  url     = {https://doi.org/10.1007/s00208-025-03141-6},
  note    = {\href{https://doi.org/10.1007/s00208-025-03141-6}{doi:10.1007/s00208-025-03141-6}}
}

@article{Sciunzi2016,
  author  = {Sciunzi, B.},
  title   = {{Classification of positive $D^{1,p}(\mathbb R^N)$-solutions to the critical $p$-Laplace equation in $\mathbb R^N$}},
  journal = {Adv. Math.},
  volume  = {291},
  year    = {2016},
  pages   = {12--23},
  doi     = {10.1016/j.aim.2015.12.028},
  url     = {https://doi.org/10.1016/j.aim.2015.12.028},
  note    = {\href{https://doi.org/10.1016/j.aim.2015.12.028}{doi:10.1016/j.aim.2015.12.028}}
}

@misc{SunWang2025,
  author        = {Sun, L. and Wang, Y.},
  title         = {{Critical quasilinear equations on Riemannian manifolds}},
  howpublished  = {arXiv:2502.08495v2},
  year          = {2025},
  eprint        = {2502.08495v2},
  archivePrefix = {arXiv},
  primaryClass  = {math.DG},
  doi           = {10.48550/arXiv.2502.08495},
  url           = {https://arxiv.org/abs/2502.08495v2},
  note          = {\href{https://doi.org/10.48550/arXiv.2502.08495}{doi:10.48550/arXiv.2502.08495}}
}

@article{Tolksdorf,
  author  = {Tolksdorf, P.},
  title   = {{Regularity for a more general class of quasilinear elliptic equations}},
  journal = {J. Differential Equations},
  volume  = {51},
  number  = {1},
  year    = {1984},
  pages   = {126--150},
  doi     = {10.1016/0022-0396(84)90105-0},
  url     = {https://doi.org/10.1016/0022-0396(84)90105-0},
  note    = {\href{https://doi.org/10.1016/0022-0396(84)90105-0}{doi:10.1016/0022-0396(84)90105-0}}
}

@article{Vetois2024,
  author  = {V{\'e}tois, J.},
  title   = {{A note on the classification of positive solutions to the critical $p$-Laplace equation in $\mathbb R^n$}},
  journal = {Adv. Nonlinear Stud.},
  volume  = {24},
  number  = {3},
  year    = {2024},
  pages   = {543--552},
  doi     = {10.1515/ans-2023-0129},
  url     = {https://doi.org/10.1515/ans-2023-0129},
  note    = {\href{https://doi.org/10.1515/ans-2023-0129}{doi:10.1515/ans-2023-0129}}
}

@misc{YuZhou2025,
  author        = {Yu, B. and Zhou, Y.},
  title         = {{Liouville type theorem for a class of quasilinear $p$-Laplace type equations in the half space}},
  howpublished  = {arXiv:2509.11283v1},
  year          = {2025},
  eprint        = {2509.11283v1},
  archivePrefix = {arXiv},
  primaryClass  = {math.AP},
  doi           = {10.48550/arXiv.2509.11283},
  url           = {https://arxiv.org/abs/2509.11283v1},
  note          = {\href{https://doi.org/10.48550/arXiv.2509.11283}{doi:10.48550/arXiv.2509.11283}}
}

@misc{Zhang2026,
  author        = {Zhang, Y. R.-Y.},
  title         = {{Critical $p$-Laplace equations with monotone coefficients: Liouville classification and a Schoen-type Harnack inequality}},
  howpublished  = {arXiv:2608.09113v2},
  year          = {2026},
  eprint        = {2608.09113v2},
  archivePrefix = {arXiv},
  primaryClass  = {math.AP},
  doi           = {10.48550/arXiv.2608.09113},
  url           = {https://arxiv.org/abs/2608.09113v2},
  note          = {66 pp. \href{https://doi.org/10.48550/arXiv.2608.09113}{doi:10.48550/arXiv.2608.09113}}
}

@misc{YangZhou2024,
  author        = {Zhou, Y.},
  title         = {{Classification theorem for positive critical points of Sobolev trace inequality}},
  howpublished  = {arXiv:2402.17602v4},
  year          = {2024},
  eprint        = {2402.17602v4},
  archivePrefix = {arXiv},
  primaryClass  = {math.AP},
  doi           = {10.48550/arXiv.2402.17602},
  url           = {https://arxiv.org/abs/2402.17602v4},
  note          = {\href{https://doi.org/10.48550/arXiv.2402.17602}{doi:10.48550/arXiv.2402.17602}}
}

\footnotesize{
    Contact information:
    \begin{itemize}
       
        \item Tian Wu, School of Mathematical Sciences, University of Science and Technology of China, Hefei, Anhui, 230026, People's Republic of China. Email: \emph{wt1997@ustc.edu.cn}

        \item Jin Yan, Institute of Mathematics, Academy of Mathematics and Systems Science, Chinese Academy of Sciences, Beijing, 100190, People's Republic of China. Email: \emph{yanjin@amss.ac.cn}
        
        \item Hua Zhu, School of Mathematical and Physics, Southwest University of Science and Technology, Mianyang, Sichuan, 621010, People's Republic of China. Email: \emph{zhuhmaths@mail.ustc.edu.cn}
    \end{itemize}
}

\end{document}